\def\sl{\mathrm{sl}}
\def\Cur{\mathrm{Cur}}
\def\Aut{\mathrm{Aut}}
\newtheorem{remark}{Remark}
\newtheorem{lemma}{Lemma}
\newtheorem{theorem}{Theorem}
\newtheorem{corollary}{Corollary}
\begin{document}


\begin{center}
{\Large
Conformal Yang---Baxter equation on $\Cur(\sl_2(\mathbb{C}))$
}

\smallskip

Vsevolod Gubarev, Roman Kozlov
\end{center}

\begin{abstract}
In 2008, J. Liberati defined what is a conformal Lie bialgebra and introduced the conformal classical Yang---Baxter equation (CCYBE). An $L$-invariant solution to the weak version of CCYBE provides a conformal Lie bialgebra structure.
We describe all solutions to the conformal classical Yang---Baxter equation
on the current Lie conformal algebra $\Cur(\sl_2(\mathbb{C}))$ and to the weak version of it.

{\it Keywords}:
conformal Lie algebra, conformal classical Yang---Baxter equation.
\end{abstract}

\section{Introduction}

The theory of Lie bialgebras and Poisson---Lie groups was found by V.G. Drinfeld~\cite{Belavin1982}, M.A. Semenov-Tian-Shansky~\cite{Semenov1983} and others at the early 1980s.
This notion led further V.G.~Drinfeld to define quantum groups, when the result of M.A. Semenov-Tian-Shansky is applicable for integrable systems and the Lax equation.
We refer to the known books on the subject~\cite{Guide,Majid}.

Lie conformal algebras introduced by V.G. Kac in~\cite{Kac} encode the singular parts of the operator product expansion (OPE) of chiral fields in conformal field theory developed by A.A.~Belavin, A.M. Polyakov and A.B. Zamolodchikov~\cite{BPZ} in 1984.
The structure theory of Lie and associative conformal (super)algebras was studied in~\cite{SimpleConf,Fattori,Kolesnikov}.

Pseudoalgebras defined by B. Bakalov, A. D’Andrea and V.G.~Kac in~\cite{BAK} generalize conformal algebras involving a cocommutative Hopf algebra~$H$. For $H = \{e\}$, we get ordinary algebras and for $H = F[\partial]$ we obtain exactly conformal algebras.

In 2008, J. Liberati introduced~\cite{Liberati} a notion of Lie conformal bialgebra and suggested so called conformal classical Yang---Baxter equation (CCYBE) as a source of (coboundary) Lie conformal bialgebras.
In 2012, these notions were generalized for Lie pseudobialgebras~\cite{BL}. 

We also mention the works devoted to the analogous notions defined for associative conformal algebras and associative pseudoalgebras.
In~\cite{Bai2021} and in~\cite{Liu2022} the associative Yang---Baxter equation was defined for associative conformal algebras and associative pseudoalgebras respectively. 
In~\cite{Bai}, so called conformal $S$-equation for left-symmetric conformal algebras was written down and studied.

In~\cite{SimpleConf}, A. D'Andrea and V.G. Kac proved the following structure result: every simple conformal algebra of finite type
is isomorphic either to the Virasoro conformal algebra $\mathrm{Vir}$ or to the current Lie conformal algebra $\Cur(g)$ associated to a simple finite-dimensional Lie algebra $g$.

In the current work, we classify all solutions to (weak) conformal classical Yang---Baxter equation on $\mathrm{Vir}$ and $\Cur(\sl_2(\mathbb{C}))$.
For $L = \Cur(\sl_2(\mathbb{C}))$, we obtain sufficient and necessary conditions of a tensor $r = \sum \nolimits A_{ql}(\partial_{\otimes 1},\partial_{\otimes 2}) q\otimes l\in L\otimes L$, where $q, l \in \{e, f, h\}$, to be an $L$-invariant solution to the (weak) CCYBE in terms of the polynomials $A_{ql}(x,-x)$. In particular, we show that $A_{ql}(x,-x) - A_{ql}(0,0)$ are odd polynomials on~$x$, which are also proportional to each other. 
The same approach applied for the simple Lie conformal algebra $L = \mathrm{Vir}$ implies that $r = A(\partial_{\otimes 1},\partial_{\otimes 2}) v\otimes v$ is a~solution to the weak CCYBE if and only if $A(x, -x) = 0$.

In~\cite[Ex.\,2.18]{Liberati}, coboundary conformal Lie bialgebra structures of special type on the solvable Lie conformal algebra of rank~2 are considered.
In the case, the condition that the involving polynomial has to be odd also appears.

Now, we give a~short outline of the work.
In~\S2, we provide preliminaries about conformal Lie algebras and the conformal classical Yang---Baxter equation.

In~\S3, we get the complete description of solutions to the (weak) CCYBE on $L = \Cur(\sl_2(\mathbb{C}))$.
For this, we clarify the situation with solutions to the classical Yang---Baxter equation on $\sl_2(\mathbb{C})$ (\S3.1). Further, for a tensor $r = \sum \nolimits A_{ql}(\partial_{\otimes 1},\partial_{\otimes 2}) q\otimes l\in L\otimes L$, $q, l \in \{e, f, h\}$,
we rewrite what $L$-invariance means in terms of the polynomials $A_{ql}(x,-x)$ (\S3.2).
The paragraph~\S3.3 is the most technical part of the work,
where we interpret CCYBE and the weak version of it again in terms of $A_{ql}(x,-x)$.  
Finally, we get the complete description of solutions to the weak conformal classical Yang---Baxter equation 
and to CCYBE itself on $\Cur(\sl_2(\mathbb{C}))$ in Theorem~\ref{thm} and in Corollary~\ref{coro:main}. 

In~\S4, we get the description of solutions to the weak CCYBE
on $\mathrm{Vir}$ (Theorem~\ref{Vir}).

\section{Preliminaries}
Throughout this paper let us fix the standard basis $e,f,h$ of $\sl_2(\mathbb{C})$ such that
$$
[e,f] =  h, \quad
[h,e] = 2e, \quad
[h,f] = -2f.
$$

\subsection{Conformal Lie algebras}

A (free) $\mathbb {C}[\partial]$-module $L$ is called a conformal algebra
if there is a $\lambda$-bracket on $L$
$$
[\cdot_\lambda\cdot]\colon L\otimes L\to \mathbb{C}[\lambda]\otimes L
$$
satisfying the identities,
\begin{equation} \label{Conf:sesqui}
[\partial a_\lambda b] = -\lambda [a_\lambda b],\quad
[a_\lambda\partial b]=(\lambda +\partial)[a_\lambda b].
\end{equation}

A conformal algebra~$L$ is called a conformal Lie algebra if $L$ satisfies
the following conformal analogues of anticommutativity and the Jacobi identity,
$$
[a_\lambda b]=-[b_{-\lambda-\partial}a], \quad
[a_\lambda[b_\mu c]]
 - [b_\mu[a_\lambda c]]
 = [[a_\lambda b]_{\lambda+\mu}c].
$$

The Virasoro Lie conformal algebra $\mathrm{Vir}$ is defined as follows,
$$
\mathrm{Vir} = \mathbb{C}[\partial]v,\quad 
[v_{\lambda}v] = (\partial + 2\lambda)v.
$$

Given a Lie algebra~$g$, the current Lie conformal algebra $\Cur(g)$
on the space $\mathbb{C}[\partial]g$ is defined as follows,
$$
[f(\partial)a_\lambda g(\partial)b]
 = f(-\lambda)g(\lambda+\partial)[a,b], \quad a,b\in g.
$$

A (left) module $M$ over a Lie conformal algebra $L$ is a left
$\mathbb{C}[\partial]$-module endowed with a $\mathbb{C}$-linear map 
$(\cdot_\lambda\cdot)\colon C\otimes M \to M[\lambda]$ satisfying the identities
$$
\partial a_\lambda v = - \lambda a_\lambda v, \quad 
a_\lambda \partial v = (\partial+\lambda) a_\lambda v, \quad
[a_\lambda b]_{\lambda + \mu} v = a_\lambda (b_\mu v) - b_\mu (a_\lambda v)
$$
for all $a, b \in L$, $v \in M$.

\begin{remark}
Given a conformal Lie algebra~$L$, the space $L^{\otimes n}$ 
is a left $L$-module under the action
$$
a_\lambda (a_1 \otimes \dots \otimes a_n) 
 = \sum\limits_{i=1}^n a_1 \otimes \dots \otimes [a_\lambda a_i] \otimes \dots \otimes a_n,
$$
where $a, a_1, \dots a_n \in L$.
\end{remark}

A~$\mathbb{C}[\partial]$-submodule $I$ of $L$ is called an ideal of $L$ if
$[I_\lambda L]\subset I$.
A conformal Lie algebra~$L$ is of finite type if $L$ is finitely-generated as $\mathbb{C}[\partial]$-module.
A conformal Lie algebra~$L$ is called simple if $[L_\lambda L]\neq(0)$
and there are only two ideals of $L$: $(0)$ and $L$.

Recall~\cite{SimpleConf} that every simple conformal algebra of finite type
is isomorphic either to $\mathrm{Vir}$ or to $\Cur(g)$
associated to a simple finite-dimensional Lie algebra $g$.

Given a Lie algebra~$g$ and an automorphism~$\varphi$ of $g$, we may extend
it to an automorphism of $\Cur(g)$ as a $\partial$-linear operator by the formula
$\varphi(f(\partial)a) = f(\partial)\varphi(a)$, $a\in g$.

\subsection{Conformal classical Yang---Baxter equation}

Let $L$ be a Lie conformal algebra and 
$r = \sum\nolimits a_i \otimes b_i \in L\otimes L$. Set 
$\partial_{\otimes 1} = \partial \otimes 1 \otimes 1$, 
$\partial_{\otimes 2} = 1 \otimes \partial \otimes 1$, 
$\partial_{\otimes 3} = 1 \otimes 1\otimes \partial$, and 
$\partial^{\otimes 3} =  \partial_{\otimes 1} + \partial_{\otimes 2} + \partial_{\otimes 3}$. 
The following equation,
\begin{multline} \label{CCYBE}
\llbracket r, r\rrbracket 
 := \sum\nolimits ([{a_i}_\lambda a_j] \otimes b_i \otimes b_j
\rvert_{\lambda = \partial_{\otimes 2}} 
- a_i \otimes [{a_j}_\lambda b_i] \otimes b_j \rvert_{\lambda 
= \partial_{\otimes 3}} \\
- a_i \otimes a_j \otimes [{b_j}_\lambda b_i] 
\rvert_{\lambda = \partial_{\otimes 2}}) = 
0~(\!\!\!\!\!\!\mod~\partial^{\otimes 3}),
\end{multline}
holding in $L^{\otimes 3}$, is called the~\textit{conformal classical Yang---Baxter equation} (CCYBE)~\cite{Liberati}.

A~tensor~$r\in L\otimes L$ satisfying the~\eqref{CCYBE} is called a solution to the CCYBE.

An equation 
\begin{equation} \label{weak-CCYBE}
a_\mu \llbracket r, r\rrbracket = 0\ (\!\!\!\!\!\!\mod \mu = - \partial^{\otimes 3})
\end{equation}
fulfilled for all $a \in L$ is called the {\it weak CCYBE}.

A solution $r$ to the CCYBE (or the weak one) is called {\it $L$-invariant}, 
if the following equality holds for every $a\in L$,
\begin{equation} \label{ConfInv}
a_{\lambda}(r + \tau(r))\rvert_{\lambda = - \partial^{\otimes 2}} = 0,
\end{equation}
where $\partial^{\otimes2} = \partial\otimes 1 + 1 \otimes \partial$
and $\tau\colon L\otimes L\to L\otimes L$ is defined as follows,
$\tau(a\otimes b) = b\otimes a$. 
A solution~$r$ of the (weak) CCYBE is called {\it skew-symmetric} if $r+\tau(r) = 0$.

In~\cite{Liberati}, J. Liberati proved that given a conformal Lie algebra~$L$ and $r\in L\otimes L$, the map
$\delta(a) = a_\lambda r|_{\lambda = -\partial^{\otimes2}}$
is the cocommutator of a conformal Lie bialgebra structure on~$L$ if and only if $r$ is an $L$-invariant solution to the weak CCYBE on~$L$.
Hence, we focus our attention on skew-symmetric and $L$-invariant solutions
to the (weak) conformal classical Yang---Baxter equation.

\begin{remark}
Let $L = \Cur(g)$ be a conformal Lie algebra, $r= \sum a_i\otimes b_i\in L\otimes L$ be a~solution to the (weak) CCYBE, and $\varphi\in\Aut(g)$ be extended to the automorphism of~$L$. 
Then $\varphi(r) := \sum \varphi(a_i)\otimes \varphi(b_i)$ is again a~solution to the (weak) CCYBE. Indeed, for a~solution~$r$ to the CCYBE we have
\begin{multline*}
\llbracket\varphi(r), \varphi(r)\rrbracket 
 = \sum \nolimits ([{\varphi(a_i)}_\lambda \varphi(a_j)] \otimes 
 \varphi(b_i) \otimes \varphi(b_j) \rvert_{\lambda = \partial_{\otimes 2}} \\
 - \varphi(a_i) \otimes [{\varphi(a_j)}_\lambda \varphi(b_i)] \otimes \varphi(b_j) \rvert_{\lambda 
 =  \partial_{\otimes 3}} 
 - \varphi(a_i) \otimes \varphi(a_j) \otimes [{\varphi(b_j)}_\lambda \varphi(b_i)]\rvert_{\lambda=\partial_{\otimes 2}}) \\
 = \sum \nolimits ([{\varphi(a_i)}, \varphi(a_j)] 
  \otimes \varphi(b_i) \otimes \varphi(b_j) \rvert_{\lambda = \partial_{\otimes 2}} \\
 - \varphi(a_i) \otimes [{\varphi(a_j)}, \varphi(b_i)] 
 \otimes \varphi(b_j) \rvert_{\lambda=\partial_{\otimes 3}} 
 - \varphi(a_i) \otimes \varphi(a_j) \otimes [{\varphi(b_j)}, \varphi(b_i)] 
 \rvert_{\lambda = \partial_{\otimes 2}}) \allowdisplaybreaks \\
 = \sum \nolimits (\varphi([{a_i}, a_j] \otimes 
 \varphi(b_i) \otimes \varphi(b_j) \rvert_{\lambda = \partial_{\otimes 2}} \\
 - \varphi(a_i) \otimes \varphi([{a_j}, b_i] 
 \otimes \varphi(b_j) \rvert_{\lambda=\partial_{\otimes 3}} 
 - \varphi(a_i) \otimes \varphi(a_j) \otimes \varphi([{b_j}, b_i]\rvert_{\lambda = \partial_{\otimes 2}}) \\
 = \varphi\bigg(
 \sum\nolimits ([{a_i}_\lambda a_j] \otimes b_i \otimes b_j\rvert_{\lambda = \partial_{\otimes 2}} 
 - a_i \otimes [{a_j}_\lambda b_i] \otimes b_j \rvert_{\lambda=\partial_{\otimes 3}} \\
 - a_i \otimes a_j \otimes [{b_j}_\lambda b_i]\rvert_{\lambda=\partial_{\otimes 2}})\bigg) 
 = \varphi( \llbracket r, r \rrbracket) 
 = 0\ (\!\!\!\!\!\!\mod \partial^{\otimes 3}).
\end{multline*}
If $r$ is a solution to the weak CCYBE, it is enough to consider $\varphi(\varphi^{-1}(a))_\lambda \llbracket \varphi(r),\varphi(r) \rrbracket$ for all $a\in \Cur\,g$.
\end{remark}

\section{Solutions to the (weak) CCYBE on $\Cur(\sl_2(\mathbb{C}))$}

Recall that $r = \sum \nolimits a_i\otimes b_i$, where $a_i, b_i \in
\Cur (\sl_2(\mathbb{C}))$ are polynomials on $\partial$ with coefficients in
$\sl_2(\mathbb{C})$. 
It is clear that we may write
\begin{multline*}
r = \sum \nolimits (g^e_i(\partial) e + g^f_i(\partial) f + g^h_i(\partial) h)
\otimes (\hat{g}^e_i(\partial) e + \hat{g}^f_i(\partial) f + \hat{g}^h_i(\partial) h)
\\
= \sum \nolimits (g^e_i(\partial) e \otimes \hat{g}^e_i(\partial) e) + \dots.
\end{multline*}

Since a group of terms may not necessarily be presented
as a decomposable tensor, it is convenient to denote all the stuff with $\partial$ 
as polynomial on
two independent variables $\partial_{\otimes 1}$ and $\partial_{\otimes 2}$.

Thus, we may write a
$\Cur(\sl_2(\mathbb{C}))$-invariant solution to either version of the CCYBE in the form 
$r = \sum \nolimits A_{ql}(\partial_{\otimes 1},\partial_{\otimes 2}) q\otimes l$, where $q, l \in \{e, f, h\}$ and $A_{ql}(x, y)\in\mathbb{C}[x,y]$.

First of all, in this section we recall the general solution to the CYBE on the Lie algebra $\sl_2(\mathbb{C})$.
After that, we get the conditions fulfilled on the polynomial coefficients of~$r$.
Finally, we obtain the exact form of the solutions~$r$.

\subsection{Case of $\sl_2(\mathbb{C})$}

The CCYBE on $\Cur(\sl_2(\mathbb{C}))$ for $\partial = 0$ converts to the ordinary (non-conformal) CYBE.
We need the following result.

\begin{lemma} \label{lem:ZeroLevel}
a) An $\sl_2(\mathbb{C})$-invariant solution to the weak CYBE has the form
\begin{equation} \label{gen-sol}
r_0 = \alpha (h \otimes e {-} e \otimes h) 
 + \beta (f \otimes e {-} e \otimes f) + 
\gamma (h \otimes f {-} f \otimes h) 
 + \zeta (h \otimes h {+} 4 e \otimes f),
\ \alpha, \beta, \gamma, \zeta \in \mathbb{C}.
\end{equation}
The general form of the skew-symmetric solution to the weak CYBE comes when $\zeta = 0$.

b) \cite{Stolin} Up to the action of~$\Aut(\sl_2(\mathbb{C}))$ and multiply on a nonzero scalar, a skew-symmetric solution to CYBE on~$\sl_2(\mathbb{C})$ equals~$h\otimes e - e\otimes h$.
\end{lemma}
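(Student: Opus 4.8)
The plan is to pass to the level $\partial=0$, where the two defining conditions become purely Lie-theoretic, classify the admissible tensors by the representation theory of $\sl_2(\mathbb{C})$, and then argue that the weak CYBE imposes nothing beyond invariance. Concretely, setting $\partial=0$ in \eqref{ConfInv} turns the invariance condition into $a\cdot(r+\tau(r))=0$ for all $a\in\sl_2(\mathbb{C})$ (invariance of the symmetric part under the diagonal adjoint action), while the weak version \eqref{weak-CCYBE} becomes $a\cdot\llbracket r,r\rrbracket=0$ for all $a$, i.e. the requirement that $\llbracket r,r\rrbracket\in\sl_2(\mathbb{C})^{\otimes 3}$ be $\sl_2(\mathbb{C})$-invariant. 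Thus I must describe all $r\in\sl_2(\mathbb{C})\otimes\sl_2(\mathbb{C})$ whose symmetric part is invariant and whose $\llbracket r,r\rrbracket$ is invariant.

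First I would classify the invariant tensors. Writing $r=r_s+r_a$ with $r_s=\tfrac12(r+\tau(r))$ and $r_a=\tfrac12(r-\tau(r))$, the condition constrains only $r_s$, since $r_a+\tau(r_a)=0$ automatically. By complete reducibility the symmetric square $S^2(\sl_2(\mathbb{C}))$ contains the trivial module exactly once, spanned by the Casimir element $\Omega=h\otimes h+2(e\otimes f+f\otimes e)$; hence $r_s=\zeta\,\Omega$ for a scalar $\zeta$. The skew part $r_a$ ranges freely over the three-dimensional space $\wedge^2\sl_2(\mathbb{C})$, with basis $h\wedge e$, $h\wedge f$, $e\wedge f$, where $x\wedge y:=x\otimes y-y\otimes x$. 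Collecting $\zeta\,\Omega$ together with these three skew directions, and bookkeeping the skew component $2\zeta\,(e\wedge f)$ hidden inside $\zeta(h\otimes h+4e\otimes f)$, yields exactly the four-parameter family \eqref{gen-sol}; this identification is a short linear-algebra check that may equally be run directly on the nine coefficients $A_{ql}(0,0)$ by imposing $e\cdot(r+\tau r)=f\cdot(r+\tau r)=h\cdot(r+\tau r)=0$.

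The crux is to show that every invariant $r$ automatically solves the weak CYBE, so that the invariance condition forces no further restriction beyond \eqref{gen-sol}. Here I would invoke the classical fact from the theory of coboundary Lie bialgebras (Drinfeld; see \cite{Guide}) that invariance of $r+\tau(r)$ forces $\llbracket r,r\rrbracket$ to be invariant; this is precisely the non-conformal shadow of Liberati's theorem recalled in \S2.2. For $\sl_2(\mathbb{C})$ the skew-symmetric case can moreover be seen by hand: when $r=r_a$ is skew, $\llbracket r,r\rrbracket$ is proportional to the Schouten square and lies in $\wedge^3\sl_2(\mathbb{C})$, which is one-dimensional and coincides with the full space of invariants in $\sl_2(\mathbb{C})^{\otimes 3}$, so the weak CYBE holds trivially. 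I expect this automatic-invariance step to be the main obstacle, since it is the only place where the quadratic expression $\llbracket r,r\rrbracket$ enters; the remaining assertions are then immediate.

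Finally, the skew-symmetric solutions are obtained by setting $r+\tau(r)=0$; as $r+\tau(r)=2\zeta\,\Omega$, this is equivalent to $\zeta=0$, giving the stated specialization of \eqref{gen-sol}. Part b) requires no argument on our side: it is quoted from \cite{Stolin}, providing the normal form $h\otimes e-e\otimes h$ of a skew-symmetric solution up to the action of $\Aut(\sl_2(\mathbb{C}))$ and rescaling.
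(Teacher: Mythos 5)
Your derivation of the form \eqref{gen-sol} is sound and runs parallel to the paper's: where the paper imposes $a\cdot(r_0+\tau(r_0))=0$ coefficient by coefficient, you note that the trivial module occurs exactly once in $S^2(\sl_2(\mathbb{C}))$ (spanned by the Casimir $\Omega$) and that $\wedge^2\sl_2(\mathbb{C})$ is unconstrained; both give the same four-parameter family, and your bookkeeping of the skew component hidden in $\zeta(h\otimes h+4e\otimes f)$ is correct. The problem is the step you yourself call the crux. There is no ``classical fact'' that invariance of $r+\tau(r)$ forces $\llbracket r,r\rrbracket$ to be invariant. For skew-symmetric $r$ the hypothesis is vacuous, while $\llbracket r,r\rrbracket$ is a generally non-invariant element of $\wedge^3 g$ --- that is exactly why the (weak) CYBE is a nontrivial condition, and why \S3.3 of this paper extracts genuine constraints from it beyond $L$-invariance (the rank-one matrix $M$, the common polynomial~$f$). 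Liberati's theorem, as recalled in \S2.2, is an equivalence whose right-hand side has \emph{two} conditions ($L$-invariance \emph{and} the weak CCYBE); it does not assert that one implies the other.

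The gap is localized and repairable, and the repair is essentially your own remark about the skew case plus one ingredient you omit. Write $r=a+s$ with $a$ skew and $s=\zeta\Omega$ invariant. The paper cites \cite[p.~54]{Guide} for the decomposition $\llbracket a+s,a+s\rrbracket=\llbracket a,a\rrbracket+\llbracket s,s\rrbracket$, the cross terms vanishing precisely because $s$ is invariant. Then $\llbracket s,s\rrbracket$ is invariant (a standard property of the symmetric invariant element), and $\llbracket a,a\rrbracket$ lies in $\wedge^3\sl_2(\mathbb{C})$, which is one-dimensional and hence the trivial module --- this is the observation you make, but you apply it only when $r$ is purely skew. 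Without the bilinearity step your argument covers only $\zeta=0$ and the general case rests on the false principle above. Once that step is inserted, your proof matches the paper's; the remaining points (the $\zeta=0$ specialization for skew-symmetric solutions and quoting \cite{Stolin} for part b)) are handled the same way in both.
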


\begin{proof}
a) Let $r$ be an $L$-invariant solution to the weak CYBE, where $L = \sl_2(\mathbb{C})$. Denote 
$$
r_0 = e\otimes (a_e e + a_f f + a_h h)
   + f\otimes (b_e e + b_f f + b_h h)
   + h\otimes (c_e e + c_f f + c_h h)
$$
for some $a_x,b_x,c_x\in\mathbb{C}$. Then 
\begin{multline*}
r_0 + \tau(r_0)
 = 2a_e e\otimes e 
 + (a_f+b_e)(e\otimes f + f\otimes e)
 + (a_h+c_e)(e\otimes h + h\otimes e) \\
 + 2b_f f\otimes f
 + (b_h + c_f)(f\otimes h + h\otimes f)
 + 2c_h h\otimes h.
\end{multline*}
The action of $e,f,h$ on $r_0+\tau(r_0)$ give us equalities
$$
a_e = 0, \quad
b_f = 0, \quad
a_h+c_e = 0, \quad
b_h+c_f = 0, \quad
a_f+b_e-4c_h = 0.
$$
Thus, 
\begin{multline*}
r_0 = a_h (e \otimes h - h \otimes e) 
  + b_e (f \otimes e - e \otimes f) 
  + b_h (f \otimes h - h \otimes f) 
  + c_h (h \otimes h + 4 e \otimes f) \\
  = a_h (e \otimes h - h \otimes e) 
  + (b_e-2c_h)(f \otimes e - e \otimes f) 
  + b_h (f \otimes h - h \otimes f) 
  + c_h (h \otimes h + 2 e \otimes f + 2f\otimes e).
\end{multline*}

Denote $a_h$, $b_e-c_h$, $b_h$, and $c_h$ as $\alpha$, $\beta$, $\gamma$, and $\zeta$, respectively.
We may also present 
$r_0 = s + r_0^{-}$,
where 
$s = \zeta (h\otimes h + 2(e\otimes f + f\otimes e))$ and
$r_0^{-}$ is a skew-symmetric part of $r_0$.
Since $s$ is $L$-invariant,
we have, due to~\cite[p.\,54]{Guide}, that 
$$
CYBE( s + r_0^- ) = CYBE(s) + CYBE(r_0^-).
$$
It is known that $CYBE(s) = 0$ and $CYBE(r_0^-) \in\wedge^3 L$,
i.\,e., $CYBE(r_0^-)$ is constant and, hence, $L$-invariant.

The second part of a) follows trivially.
\end{proof}

\subsection{$L$-invariance} \label{sec:Skew-sym}

Let us substitute $a = g(\partial) e$, where $g(\partial)\in\mathbb{C}[\partial]\setminus\{0\}$, in~\eqref{ConfInv}. 
Modulo $\lambda = - \partial^{\otimes 2}$, we have
\begin{multline*} 
0 = a_{\lambda} (r + \tau (r)) 
 = g(\partial) e_{\lambda} \big(\sum\nolimits 
(A_{ql}(\partial_{\otimes 1}, \partial_{\otimes 2}) q\otimes l + 
A_{ql}(\partial_{\otimes 2}, \partial_{\otimes 1}) l\otimes q)\big) \\
= \sum\nolimits (g(-\lambda)A_{ql}((\partial+\lambda)_{\otimes 1}, 
\partial_{\otimes 2})[e_{\lambda} q] \otimes l + g(-\lambda) 
A_{ql}(\partial_{\otimes 1}, 
(\partial+\lambda)_{\otimes 2}) q\otimes [e_{\lambda} l] \\
+ g(-\lambda) A_{ql}(\partial_{\otimes 2}, 
(\partial+\lambda)_{\otimes 1}) [e_{\lambda} l]\otimes q +
g(-\lambda) A_{ql}((\partial+\lambda)_{\otimes 2}, 
\partial_{\otimes 1}) l\otimes [e_{\lambda} q]).
\end{multline*}

Applying the multiplication table of $\Cur(\sl_2(\mathbb{C}))$, we get modulo $\lambda = - \partial^{\otimes 2}$,
\begin{multline*} 
0 = g(-\lambda)\sum\limits_{q,l} ((A_{fl}((\partial+\lambda)_{\otimes 1}, 
\partial_{\otimes 2}) h \otimes l - 2 A_{hl}((\partial+\lambda)_{\otimes 1}, 
\partial_{\otimes 2}) e \otimes l) \\
+ (A_{qf}(\partial_{\otimes 1}, 
(\partial+\lambda)_{\otimes 2}) q\otimes h - 2 A_{qh}(\partial_{\otimes 1}, 
(\partial+\lambda)_{\otimes 2}) q\otimes e) \\
+ (A_{qf}(\partial_{\otimes 2}, 
(\partial+\lambda)_{\otimes 1}) h \otimes q - 2 A_{qh}(\partial_{\otimes 2}, 
(\partial+\lambda)_{\otimes 1}) e \otimes q) \\
+ (A_{fl}((\partial+\lambda)_{\otimes 2}, 
\partial_{\otimes 1}) l\otimes h - 2 A_{hl}((\partial+\lambda)_{\otimes 2}, 
\partial_{\otimes 1}) l\otimes e).
\end{multline*}

Since $\Cur(\sl_2(\mathbb{C}))$ is a free $\mathbb{C}[\partial]$-module, 
we may omit $g(\partial^{\otimes 2})$, and all coefficients at $q\otimes l$ have to be zero.
For example, the coefficient at $h \otimes e$ equals
$$
0 = A_{fe}((\partial+\lambda)_{\otimes 1},\partial_{\otimes 2}) 
- 2 A_{hh}(\partial_{\otimes 1},(\partial+\lambda)_{\otimes 2}) 
+ A_{ef}(\partial_{\otimes 2},(\partial+\lambda)_{\otimes 1}) 
- 2 A_{hh}((\partial+\lambda)_{\otimes 2},\partial_{\otimes 1}).
$$

The equations are written modulo $\lambda = - \partial^{\otimes 2}$,
so, one can transfer polynomials over tensor product by a rule 
$f(\partial) \otimes 1 = 1 \otimes f(- \partial - \lambda)$.
Without loss of generality, gather all the polynomials at $h\otimes e$ on the first tensor factor:
\begin{multline*} 
0 = A_{fe}((\partial+\lambda)_{\otimes 1},(-\partial-\lambda)_{\otimes 1})
 - 2 A_{hh}(\partial_{\otimes 1},- \partial_{\otimes 1}) \\
 + A_{ef}((-\partial-\lambda)_{\otimes 1},(\partial+\lambda)_{\otimes 1}) 
 - 2 A_{hh}(- \partial_{\otimes 1},\partial_{\otimes 1}).
\end{multline*} 
Denoting $A_{ql}(x, - x) = A'_{ql}(x)$, we may rewrite the expression as follows,
$$
0 = A'_{fe}(\partial_{\otimes 1}+\lambda)
 - 2A'_{hh}(\partial_{\otimes 1}) 
 + A'_{ef}(-\partial_{\otimes 1}-\lambda) 
 - 2A'_{hh}(-\partial_{\otimes 1}).
$$
For $\partial = 0$, we get
$0 = A'_{fe}(\lambda) + A'_{ef}(-\lambda) - 4A'_{hh}(0)$.
By~\eqref{gen-sol}, $A'_{hh}(0) = \zeta$. Hence,
\begin{equation} \label{cond-fe-ef}
A'_{fe}(\lambda) + A'_{ef}(-\lambda) = 4 \zeta.
\end{equation}

Applying~\eqref{cond-fe-ef}, we obtain
$A'_{hh}(\lambda) + A'_{hh}(-\lambda) = 2 \zeta$.

Considering actions of $g(\partial)f$ and $g(\partial)h$ 
as well as other projections, the equation~\eqref{ConfInv} is equivalent to the following list of relations:
\begin{equation} \label{cond-full}
\begin{gathered}
A'_{ee}(\lambda) + A'_{ee}(-\lambda) = 0,\quad
A'_{fe}(\lambda) + A'_{ef}(-\lambda) = 4 \zeta,\\
A'_{he}(\lambda) + A'_{eh}(-\lambda) = 0,\quad
A'_{ff}(\lambda) + A'_{ff}(-\lambda) = 0, \\
A'_{hf}(\lambda) + A'_{fh}(-\lambda) = 0,\quad
A'_{hh}(\lambda) + A'_{hh}(-\lambda) = 2 \zeta.
\end{gathered}
\end{equation}

\subsection{Properties of a~solution to the (weak) CCYBE}

Let $r = \sum \nolimits A_{ql}(\partial_{\otimes 1}, 
\partial_{\otimes 2}) q\otimes l$ be 
a solution to the CCYBE on $\Cur(\sl_2(\mathbb{C}))$, it means that 
$r$~satisfies \eqref{weak-CCYBE} and \eqref{cond-full}. 
As above, we project the equalities~\eqref{weak-CCYBE} on basic tensors $q\otimes l\otimes m$, where $q,l,m\in\{e,f,h\}$. 
Applying sesquilinearity~\eqref{Conf:sesqui}, we get
\begin{multline} \label{rr}
\llbracket r, r\rrbracket 
 := \sum\limits_{q,q',l,l'=1}^3 (A_{ql}(- \lambda, \partial_{\otimes 2})
 A_{q'l'}(\partial_{\otimes 1} + \lambda, \partial_{\otimes 3})
 [q, q'] \otimes l \otimes l' \rvert_{\lambda = \partial_{\otimes 2}} \\
 - A_{ql}(\partial_{\otimes 1}, \partial_{\otimes 2} + \lambda) 
 A_{q'l'}(- \lambda, \partial_{\otimes 3})
 q \otimes [q', l] \otimes l' \rvert_{\lambda = \partial_{\otimes 3}} \\
 - A_{ql}(\partial_{\otimes 1}, \partial_{\otimes 3} + \lambda) 
 A_{q'l'}(\partial_{\otimes 2}, - \lambda)
 q \otimes q' \otimes [l', l] \rvert_{\lambda = \partial_{\otimes 2}}) \\
 = \sum\nolimits (A_{ql}(- \partial_{\otimes 2}, \partial_{\otimes 2})
 A_{q'l'}(\partial_{\otimes 1} + \partial_{\otimes 2}, \partial_{\otimes 3})
 [q, q'] \otimes l \otimes l' \\
 - A_{ql}(\partial_{\otimes 1}, \partial_{\otimes 2} + \partial_{\otimes 3}) 
 A_{q'l'}(- \partial_{\otimes 3}, \partial_{\otimes 3})
 q \otimes [q', l] \otimes l' \\
 - A_{ql}(\partial_{\otimes 1}, \partial_{\otimes 2} + \partial_{\otimes 3}) 
 A_{q'l'}(\partial_{\otimes 2}, - \partial_{\otimes 2})
 q \otimes q' \otimes [l', l]).
\end{multline}
Now, we compute $\llbracket r,r\rrbracket$ modulo $\partial^{\otimes3}$ expressing $\partial_{\otimes 1}$ via others, 
\begin{multline*}
\llbracket r,r\rrbracket
 = \sum\nolimits (A'_{ql}(-\partial_{\otimes 2})
 A'_{q'l'}(-\partial_{\otimes 3})[q, q'] \otimes l \otimes l' \\
 - A'_{ql}(-\partial_{\otimes 2} - \partial_{\otimes 3}) 
 A'_{q'l'}(- \partial_{\otimes 3})q \otimes [q', l] \otimes l' 
 - A'_{ql}(-\partial_{\otimes 2} - \partial_{\otimes 3}) 
 A'_{q'l'}(\partial_{\otimes 2})q \otimes q' \otimes [l', l]) = 0.
\end{multline*} 

One can directly check that modulo~\eqref{cond-full} the projections
of $\llbracket r,r\rrbracket$ on $q\otimes l\otimes m$ and 
on $\sigma(q)\otimes \sigma(l)\otimes \sigma(m)$
coincide for all $q,l,m\in\{e,f,h\}$ and $\sigma\in S_3$.
Let us show an example how it works. 
For $h \otimes f \otimes f$, we have
\begin{multline*}
0 = A'_{ef}(-\partial_{\otimes 2})A'_{ff}(-\partial_{\otimes 3})[e, f]\otimes f \otimes f 
+ A'_{ff}(-\partial_{\otimes 2})A'_{ef}(-\partial_{\otimes 3})[f, e]\otimes f \otimes f \\
- A'_{hf}(-\partial_{\otimes 2}-\partial_{\otimes 3}) 
A'_{hf}(-\partial_{\otimes 3})h \otimes [h, f] \otimes f 
+ A'_{hh}(-\partial_{\otimes 2}-\partial_{\otimes 3}) 
A'_{ff}(-\partial_{\otimes 3}) h \otimes [f, h] \otimes f \\
- A'_{hf}(-\partial_{\otimes 2}-\partial_{\otimes 3})
A'_{fh}(\partial_{\otimes 2}) h \otimes f \otimes [h, f] 
 + A'_{hh}(\partial_{-\otimes 2}-\partial_{\otimes 3}) 
A'_{ff}(\partial_{\otimes 2}) h \otimes f \otimes [f, h],
\end{multline*}
which gives a zero coefficient at $h \otimes f \otimes f$
\begin{multline*}
0 = A'_{ef}(-\partial_{\otimes 2})A'_{ff}(-\partial_{\otimes 3}) - A'_{ff}(-\partial_{\otimes 2})A'_{ef}(-\partial_{\otimes 3}) 
+ 2 A'_{hf}(-\partial_{\otimes 2}-\partial_{\otimes 3}) A'_{hf}(- \partial_{\otimes 3}) \\
- 2 A'_{hh}(-\partial_{\otimes 2}-\partial_{\otimes 3}) A'_{ff}(- \partial_{\otimes 3}) + 2 A'_{hf}(-\partial_{\otimes 2}-\partial_{\otimes 3}) A'_{fh}(\partial_{\otimes 2}) - 2 A'_{hh}(-\partial_{\otimes 2}-\partial_{\otimes 3})A'_{ff}(\partial_{\otimes 2}),
\end{multline*}
At $f \otimes h \otimes f$, we analogously get the coefficient
\begin{multline*}
0 = 2 A'_{fh}(- \partial_{\otimes 2})A'_{hf}(- \partial_{\otimes 3}) - 
2 A'_{hh}(- \partial_{\otimes 2} )A'_{ff}(- \partial_{\otimes 3}) 
+ A'_{fe}(- \partial_{\otimes 2} - \partial_{\otimes 3})A'_{ff}(- \partial_{\otimes 3})  \\
- A'_{ff}(- \partial_{\otimes 2} - \partial_{\otimes 3}) 
A'_{ef}(- \partial_{\otimes 3})  
+ 2 A'_{ff}(- \partial_{\otimes 2} - \partial_{\otimes 3}) 
A'_{hh}(\partial_{\otimes 2})  
- 2 A'_{fh}(- \partial_{\otimes 2} - \partial_{\otimes 3})
A'_{hf}(\partial_{\otimes 2}).
\end{multline*}
Substituting $\partial_{\otimes 2}$ by $- \partial_{\otimes 2} - \partial_{\otimes 3}$, we transform with help of~\eqref{cond-full} the coefficient at $f\otimes h\otimes f$ to the one at $h\otimes f\otimes f$. 

Therefore, we have ten meaningful projections listed below
at $e^{\otimes 3},h\otimes e^{\otimes2},f^{\otimes 3},h\otimes f^{\otimes2},
e\otimes h^{\otimes2},f\otimes h^{\otimes2},f\otimes e^{\otimes2},
e\otimes f^{\otimes2},h^{\otimes 3},e\otimes f\otimes h$ respectively. 
For brevity, we denote $\partial_{\otimes 2}$ and $\partial_{\otimes 3}$ by $x$ and $y$:
\begin{multline} \label{eee}
0 = A'_{he}(- x)A'_{ee}(- y) 
- A'_{ee}(- x )A'_{he}(- y) 
+ A'_{eh}(- x - y) A'_{ee}(- y) \\
- A'_{ee}(- x - y) A'_{he}(- y) 
+ A'_{eh}(- x - y) A'_{ee}(x) 
- A'_{ee}(- x - y) A'_{eh}(x),
\end{multline} 
\begin{multline} \label{hee}
0 = A'_{ee}(- x)A'_{fe}(- y) 
- A'_{fe}(- x )A'_{ee}(- y) 
+ 2 A'_{hh}(- x - y) A'_{ee}(- y)  \\
- 2 A'_{he}(- x - y) A'_{he}(- y)  
+ 2 A'_{hh}(- x - y) A'_{ee}(x)  
- 2 A'_{he}(- x - y) A'_{eh}(x),
\end{multline} 
\begin{multline} \label{fff}
0 = A'_{ff}(- x)A'_{hf}(- y) 
- A'_{hf}(- x )A'_{ff}(- y) 
+ A'_{ff}(- x - y) A'_{hf}(- y) \\
- A'_{fh}(- x - y) A'_{ff}(- y)  
+ A'_{ff}(- x - y) A'_{fh}(x)  
- A'_{fh}(- x - y)A'_{ff}(x),
\end{multline} 
\begin{multline} \label{hff}
0 = A'_{ef}(- x)A'_{ff}(- y) 
- A'_{ff}(- x )A'_{ef}(- y) 
+ 2 A'_{hf}(- x - y) A'_{hf}(- y)  \\
- 2 A'_{hh}(- x - y) A'_{ff}(- y)  
+ 2 A'_{hf}(- x - y) A'_{fh}(x)  
- 2 A'_{hh}(- x - y)A'_{ff}(x),
\end{multline} 
\begin{multline} \label{ehh}
0 = 2 A'_{hh}(- x)A'_{eh}(- y) 
- 2 A'_{eh}(- x )A'_{hh}(- y) 
+ A'_{ee}(- x - y) A'_{fh}(- y)  \\
- A'_{ef}(- x - y) A'_{eh}(- y)  
+  A'_{ee}(- x - y) A'_{hf}(x)  
- A'_{ef}(- x - y) A'_{he}(x),
\end{multline} 
\begin{multline} \label{fhh}
0 = 2 A'_{fh}(- x)A'_{hh}(- y) 
- 2 A'_{hh}(- x )A'_{fh}(- y) 
+ A'_{fe}(- x - y) A'_{fh}(- y)  \\
- A'_{ff}(- x - y) A'_{eh}(- y)  
+ A'_{fe}(- x - y) A'_{hf}(x)  
- A'_{ff}(- x - y) A'_{he}(x),
\end{multline} 
\begin{multline} \label{fee}
0 = A'_{fe}(- x)A'_{he}(- y) 
- A'_{he}(- x )A'_{fe}(- y) 
+ A'_{fh}(- x - y) A'_{ee}(- y)  \\
- A'_{fe}(- x - y) A'_{he}(- y)  
+ A'_{fh}(- x - y) A'_{ee}(x)  
- A'_{fe}(- x - y)A'_{eh}(x),
\end{multline} 
\begin{multline} \label{eff}
0 = A'_{hf}(- x)A'_{ef}(- y) 
- A'_{ef}(- x )A'_{hf}(- y) 
+ A'_{ef}(- x - y) A'_{hf}(- y) \\
- A'_{eh}(- x - y) A'_{ff}(- y)  
+ A'_{ef}(- x - y) A'_{fh}(x)  
- A'_{eh}(- x - y) A'_{ff}(x),
\end{multline} 
\begin{multline} \label{hhh}
0 = A'_{eh}(- x)A'_{fh}(- y) 
- A'_{fh}(- x )A'_{eh}(- y) 
+ A'_{he}(- x - y) A'_{fh}(- y)  \\
- A'_{hf}(- x - y) A'_{eh}(- y)  
+  A'_{he}(- x - y) A'_{hf}(x)  
- A'_{hf}(- x - y) A'_{he}(x),
\end{multline} 
\begin{multline} \label{efh}
0 = 2 A'_{hf}(- x)A'_{eh}(- y) 
- 2 A'_{ef}(- x )A'_{hh}(- y) 
+ 2 A'_{ef}(- x - y) A'_{hh}(- y) \\
- 2 A'_{eh}(- x - y) A'_{fh}(- y)  
+ A'_{ee}(- x - y) A'_{ff}(x)  
- A'_{ef}(- x - y) A'_{fe}(x).
\end{multline} 

For the case of the weak CCYBE we have very close system of defining equations. 
More detailed, the solution to the weak CCYBE is defined by \eqref{eee}--\eqref{hhh} and a set of variations of~\eqref{efh}.

To show this, we consider projections of $(a_\mu \llbracket r,r\rrbracket)|_{b\otimes c\otimes d}$,
where $a,b,c,d\in\{e,f,h\}$ and at least two from $b,c,d$ do not have the form $[a,x]$ for some $x\in \sl_2(\mathbb{C})$.
Hence, we get the equations~\eqref{eee}--\eqref{fhh}.
For example, to obtain \eqref{fff}, one may consider the projection of~$e_\mu \llbracket r,r\rrbracket$ on $h \otimes f \otimes f$.
The only way to get the projection is to act by $e$ on $f\otimes f \otimes f$,
\begin{multline*}
e_\mu ( (2 A_{ff}(-\partial_{\otimes 2}, \partial_{\otimes 2})
A_{hf}(\partial_{\otimes 1} + \partial_{\otimes 2}, \partial_{\otimes 3}) 
- 2 A_{hf}(-\partial_{\otimes 2}, \partial_{\otimes 2}) A_{ff}(\partial_{\otimes 1} + 
\partial_{\otimes 2}, \partial_{\otimes 3})
\\
+ 2 A_{ff}(\partial_{\otimes 1}, \partial_{\otimes 2} + \partial_{\otimes 3}) 
A_{hf}(- \partial_{\otimes 3}, \partial_{\otimes 3})
- 2 A_{fh}(\partial_{\otimes 1}, \partial_{\otimes 2} + \partial_{\otimes 3}) 
A_{ff}(- \partial_{\otimes 3}, \partial_{\otimes 3})
\\
+ 2 A_{ff}(\partial_{\otimes 1}, \partial_{\otimes 2} + \partial_{\otimes 3})
A_{fh}(\partial_{\otimes 2}, -\partial_{\otimes 2}) - 
2 A_{fh}(\partial_{\otimes 1}, \partial_{\otimes 2} + \partial_{\otimes 3})
A_{ff}(\partial_{\otimes 2}, -\partial_{\otimes 2})) f \otimes f \otimes f ).
\end{multline*}
Applying $e$ to the first tensor factor, we get the projection
on $h \otimes f \otimes f$,
\begin{multline*}
 (2 A_{ff}(-\partial_{\otimes 2}, \partial_{\otimes 2})
A_{hf}(\partial_{\otimes 1} + \mu + \partial_{\otimes 2}, \partial_{\otimes 3}) 
- 2 A_{hf}(-\partial_{\otimes 2}, \partial_{\otimes 2}) A_{ff}(\partial_{\otimes 1} + \mu + 
\partial_{\otimes 2}, \partial_{\otimes 3})
\\
+ 2 A_{ff}(\partial_{\otimes 1} + \mu, \partial_{\otimes 2} + \partial_{\otimes 3}) 
A_{hf}(- \partial_{\otimes 3}, \partial_{\otimes 3})
- 2 A_{fh}(\partial_{\otimes 1} + \mu, \partial_{\otimes 2} + \partial_{\otimes 3}) 
A_{ff}(- \partial_{\otimes 3}, \partial_{\otimes 3})
\\
+ 2 A_{ff}(\partial_{\otimes 1} + \mu, \partial_{\otimes 2} + \partial_{\otimes 3})
A_{fh}(\partial_{\otimes 2}, -\partial_{\otimes 2}) - 
2 A_{fh}(\partial_{\otimes 1} + \mu, \partial_{\otimes 2} + \partial_{\otimes 3})
A_{ff}(\partial_{\otimes 2}, -\partial_{\otimes 2})).
\end{multline*}
which equals to~\eqref{fff} modulo $\mu = -\partial^{\otimes 3}$.

Further, we write down projections of $(a_\mu \llbracket r,r\rrbracket)|_{b\otimes c\otimes d}$,
where $a,b,c,d\in\{e,f,h\}$ and at least one from $b,c,d$ does not have the form $[a,x]$ for some $x\in \sl_2(\mathbb{C})$.
With the help of \eqref{eee}--\eqref{fhh}, way obtain the equations \eqref{fee} and \eqref{eff}.
For instance, consider the projection of $e_\mu \llbracket r,r\rrbracket$ on 
$e \otimes h \otimes f$. Computing, two groups of six summands appear: obtained from the projections of~$\llbracket r,r\rrbracket$ on 
$h \otimes h \otimes f$ and $e \otimes f \otimes f$ respectively. 
Since the first group up to change of variables equals~\eqref{fhh},
we obtain~\eqref{eff}. 

Now, we study the projections of $(a_\mu \llbracket r,r\rrbracket)|_{b\otimes c\otimes d}$, where $\{b,c,d\} = \{e,f,h\}$.
For example, 
\begin{multline}\label{efh'}
0 
 = (h_\mu \llbracket r,r\rrbracket)|_{e\otimes f\otimes h}
 = 2 A'_{hf}(- x)A'_{eh}(- y) 
 - 2 A'_{ef}(- x)A'_{hh}(- y) 
 + 2 A'_{ef}(- x - y) A'_{hh}(- y) \\
 - 2 A'_{eh}(- x - y) A'_{fh}(- y)  
 + A'_{ee}(- x - y) A'_{ff}(x)  
 - A'_{ef}(- x - y) A'_{fe}(x) \\
 - 2 A'_{hf}(y + z)A'_{eh}(- y) 
 + 2 A'_{ef}(y + z)A'_{hh}(- y) 
 - 2 A'_{ef}(z) A'_{hh}(- y)  \\
 + 2 A'_{eh}(z) A'_{fh}(- y) 
 - A'_{ee}(z) A'_{ff}(- y - z) 
 + A'_{ef}(z) A'_{fe}(- y - z);
\end{multline}
\begin{multline}\label{efh''}
0 = (e_\mu \llbracket r,r\rrbracket)|_{e\otimes f\otimes e}
 = - A'_{ef}(-x)A'_{fe}(- y) 
 + A'_{ff}(-x)A'_{ee}(- y) 
 + 2 A'_{hh}(-x-y) A'_{fe}(- y) \\
 - 2 A'_{hf}(-x-y) A'_{he}(- y) 
 + 2A'_{he}(-x-y) A'_{fh}(x) 
 - 2A'_{hh}(-x-y) A'_{fe}(x) \\
 + 2 A'_{ef}(- x)A'_{hh}(x+z) 
 - 2 A'_{hf}(- x)A'_{eh}(x+z) 
 - 2 A'_{ef}(z) A'_{hh}(x+z) \\
 + 2 A'_{eh}(z) A'_{fh}(x+z) 
 - A'_{ee}(z) A'_{ff}(x)
 + A'_{ef}(z) A'_{fe}(x).
\end{multline}
As above, we denote $\partial_{\otimes 2}$ and $\partial_{\otimes 3}$ by $x$ and $y$,
and $z$ stands for $\partial_{\otimes 1}$.

There are several nonequivalent (in terms of change of variables) equations 
similar to~\eqref{efh'} and~\eqref{efh''}. 
It turns out that we do not need the whole system of these equations.

\begin{remark} \label{remarkOn(22)}
Let us take the equality \eqref{efh} and consider it as the condition on a solution to the weak CCYBE. 
By Lemma~\ref{lem:ZeroLevel} we know its value when $x = y = 0$, hence we write down the new ``artificial'' equality
\begin{multline} \label{efh-shift}
0 = 2 A'_{hf}(- x)A'_{eh}(- y) 
- 2 A'_{ef}(- x )A'_{hh}(- y) 
+ 2 A'_{ef}(- x - y) A'_{hh}(- y) \\
- 2 A'_{eh}(- x - y) A'_{fh}(- y) 
+ A'_{ee}(- x - y) A'_{ff}(x)  \\
- A'_{ef}(- x - y) A'_{fe}(x) + 
4 \alpha \gamma + (4\zeta - \beta)\beta.
\end{multline} 
Up to sign, \eqref{efh-shift}$\rvert_{x = 0}$ coincides with~\eqref{efh''}$\rvert_{x=y=0}$ under the change $y\to -z$ and \eqref{efh-shift}$\rvert_{y = 0}$ coincides with~\eqref{efh'}$\rvert_{x=y=0}$ under the change $x\to -z$.
Moreover, we act by the automorphism~$\Psi$ of $\Cur(\sl_2{\mathbb{(C)}})$ defined as follows,
\begin{equation} \label{PsiAut}
\Psi(e) = f,\quad
\Psi(f) = e,\quad 
\Psi(h) = -h, 
\end{equation}
on~\eqref{efh-shift} and then take $x+y = 0$.
Up to sign, the obtained expression coincides with~\eqref{efh''}$\rvert_{x=y=0}$ under the change $-y\to z$ and the application of~\eqref{cond-full}.
\end{remark}

As for \eqref{hhh}, under action of $e$, representatives of the pattern 
$e \otimes h \otimes h$ arise from the projections 
of~$\llbracket r,r\rrbracket$ on $h \otimes h \otimes h$, 
$e \otimes f \otimes h$, and $e \otimes h \otimes f$. The resulting 
equation is as follows:
\begin{multline*}
-2(A'_{eh}(- x)A'_{fh}(- y) 
- A'_{fh}(- x )A'_{eh}(- y) 
+ A'_{he}(- x - y) A'_{fh}(- y) \\
- A'_{hf}(- x - y) A'_{eh}(- y)  
+  A'_{he}(- x - y) A'_{hf}(x)  
- A'_{hf}(- x - y) A'_{he}(x)) \\
+ 2 A'_{hf}(y+z)A'_{eh}(- y) 
- 2 A'_{ef}(y+z)A'_{hh}(- y) 
+ 2 A'_{ef}(z) A'_{hh}(- y) \\
- 2 A'_{eh}(z) A'_{fh}(- y) 
+ A'_{ee}(z) A'_{ff}(-y-z)  
- A'_{ef}(z) A'_{fe}(-y-z)  \\
+ 2 A'_{hh}(- x)A'_{ef}(x+z) 
- 2 A'_{eh}(- x)A'_{hf}(x+z) 
+ A'_{ee}(z) A'_{ff}(x+z) \\
- A'_{ef}(z) A'_{ef}(x+z) 
+ 2 A'_{ef}(z) A'_{hh}(x)  
- 2 A'_{eh}(z) A'_{hf}(x)
= 0,
\end{multline*}
that is exactly \eqref{hhh} setting $z = 0$, since the sum of the last 12 terms is zero by the projection of~\eqref{efh-shift} at $x + y = 0$.

Now, we prove some kind of symmetry for the polynomials $A'_{ql}$.
Before this, we write down the values of $A'_{ql}(0)$ by Lemma~\ref{lem:ZeroLevel}:
\begin{equation} \label{eq:A'ql(0)}
\begin{gathered}
A'_{ee}(0) = A'_{ff}(0) = 0, \quad
A'_{ef}(0) = 4\zeta-\beta,\quad
A'_{fe}(0) = \beta, \quad
A'_{he}(0) = -A'_{eh}(0) = \alpha, \\
A'_{hf}(0) = -A'_{fh}(0) = \gamma, \quad
A'_{hh}(0) = \zeta.
\end{gathered}
\end{equation}

\begin{lemma}\label{symm}
For all $q, l \in \{e, f, h\}$, we have
$A'_{ql}(x) - A'_{ql}(0) = A'_{lq}(x) - A'_{lq}(0)$.
\end{lemma}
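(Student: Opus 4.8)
The plan is to extract the claimed symmetry $A'_{ql}(x) - A'_{ql}(0) = A'_{lq}(x) - A'_{lq}(0)$ directly from the defining system \eqref{eee}--\eqref{efh} together with the $L$-invariance relations \eqref{cond-full} and the boundary values \eqref{eq:A'ql(0)}. Equivalently, writing $B_{ql}(x) := A'_{ql}(x) - A'_{ql}(0)$, I want to show $B_{ql} = B_{lq}$ for every unordered pair $\{q,l\}$. The relations \eqref{cond-full} already tell us that the \emph{symmetric} combinations behave well: for instance $A'_{ee}(x)+A'_{ee}(-x)=0$ forces $A'_{ee}$ to be odd, and similarly $A'_{ff}$ is odd and $A'_{hh}(x)+A'_{hh}(-x)=2\zeta$. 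So for the diagonal pairs $q=l$ the statement is vacuous. The content is in the six off-diagonal pairs, which by the $\Psi$-symmetry \eqref{PsiAut} (swapping $e\leftrightarrow f$, negating $h$) organize into orbits, so it suffices to verify it for representatives, say $\{e,f\}$, $\{h,e\}$, and $\{h,h\}$-type reductions, and then transport by $\Psi$.

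First I would isolate the pair $\{h,e\}$, which controls $A'_{he}$ versus $A'_{eh}$. The natural source equation is \eqref{hee} (the coefficient at $h\otimes e^{\otimes 2}$): I would substitute the boundary data from \eqref{eq:A'ql(0)} by setting one of the variables to zero, thereby linearizing the quadratic relation and solving for $A'_{he}(x)-A'_{eh}(x)$ in terms of known constants. Because $A'_{he}(0)=\alpha=-A'_{eh}(0)$, the constant discrepancy $A'_{he}(0)-A'_{eh}(0)=2\alpha$ is exactly absorbed when I pass from $A'$ to $B$, so the equation should collapse to $B_{he}=B_{eh}$. I would treat the pair $\{h,f\}$ identically, using \eqref{fhh} and the values $A'_{hf}(0)=\gamma=-A'_{fh}(0)$; alternatively, and more economically, I would just apply $\Psi$ to the $\{h,e\}$ conclusion. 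The remaining nontrivial pair $\{e,f\}$ is the most delicate: here I would use \eqref{hee} and \eqref{hff} (the coefficients at $h\otimes e^{\otimes 2}$ and $h\otimes f^{\otimes 2}$), specialize a variable to $0$, and combine with the invariance relation $A'_{fe}(x)+A'_{ef}(-x)=4\zeta$ from \eqref{cond-fe-ef}, which relates $A'_{ef}$ and $A'_{fe}$ across a sign flip. The boundary values $A'_{ef}(0)=4\zeta-\beta$ and $A'_{fe}(0)=\beta$ again combine so that the constants cancel in the $B$-normalization.

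The main obstacle I anticipate is the $\{e,f\}$ pair, because the equations mixing $A'_{ef}$ and $A'_{fe}$ are genuinely quadratic and couple to $A'_{hh}$ and to the diagonal polynomials. The trick that should make it tractable is to evaluate each projection \eqref{eee}--\eqref{efh} at the specializations $x=0$ and $y=0$ separately: each such specialization turns a product of two unknown polynomials into one unknown polynomial times a \emph{constant} (the boundary value of the other factor), yielding \emph{linear} equations in the $B_{ql}$. I expect that the resulting linear system, once the oddness of $A'_{ee},A'_{ff}$ and the relation \eqref{cond-fe-ef} are fed in, pins down exactly $B_{ql}=B_{lq}$ for all six off-diagonal pairs, with the constants from \eqref{eq:A'ql(0)} providing precisely the cancellations needed. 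Finally, I would remark that the diagonal cases hold automatically by \eqref{cond-full}, completing all nine pairs $\{q,l\}$.
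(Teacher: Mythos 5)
Your overall strategy --- specialize the projections at $x=0$ or $y=0$ and feed in the boundary values \eqref{eq:A'ql(0)} and the invariance relations \eqref{cond-full} --- is exactly the paper's starting point, but the key claim on which your plan rests is false: these specializations do \emph{not} produce linear equations in the $B_{ql}$. The terms of \eqref{eee}--\eqref{efh} whose argument is $-x-y$ specialize at $x=0$ to polynomials in $-y$, so genuinely quadratic products of unknowns survive. For instance \eqref{eee} at $x=0$ collapses to
$0 = A'_{ee}(-y)\bigl(A'_{eh}(-y) - A'_{he}(-y) + 2\alpha\bigr)$,
which gives $B_{eh}=B_{he}$ only when $A'_{ee}\not\equiv 0$; when $A'_{ee}\equiv 0$ you must fall back on \eqref{hee} at $x=0$, which reads $A'_{he}(-y)\bigl(A'_{he}(-y)-\alpha\bigr)=0$ and forces $A'_{he}\equiv\alpha$ by the boundary value. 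So even the ``easy'' pair $\{e,h\}$ already requires a case split on identically vanishing factors, which your proposal does not acknowledge.

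The pair $\{e,f\}$, which you correctly flag as the delicate one, is where the gap is fatal. The equations you propose to use, \eqref{hee} and \eqref{hff}, lose all non-constant dependence on $A'_{ef}$ and $A'_{fe}$ after specialization (e.g.\ \eqref{hee} at $x=0$ involves only $A'_{ee},A'_{he},A'_{hh}$ and the constant $A'_{fe}(0)=\beta$), so they cannot pin down $B_{ef}$ against $B_{fe}$. The paper instead works with \eqref{fee} and \eqref{ehh} at $x=0$, compares each with its $y\mapsto -y$ image, and then runs a multi-branch analysis ($\alpha\neq 0$; $\alpha=0$ with $A'_{eh}\not\equiv 0$; $A'_{eh}\equiv 0$, which in turn needs the shifted identity \eqref{efh-shift} at $y=0$ and a final dichotomy on $A'_{ee}$). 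None of this is a routine ``linear system pins it down'' computation, and your proposal supplies no substitute for it. As written, the argument establishes at most the generic case of each pair and leaves the degenerate branches --- which are precisely where the work lies --- unproven.
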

\begin{proof}
The statement is trivial for $q = l$.
Let us take~\eqref{eee} and \eqref{hee} with $x = 0$: 
\begin{gather} 
0 = A'_{ee}(-y) (A'_{eh}(-y) - A'_{he}(-y) + 2 \alpha), \label{eee_0}\\
0 = A'_{ee}(-y) (A'_{hh}(-y) - \beta/2) - A'_{he}(-y) (A'_{he}(-y) - \alpha). \label{hee_0}
\end{gather}
If $A'_{ee}(-y) \ne 0$, then \eqref{eee_0} implies Lemma for 
the pair $A'_{eh}(x)$, $A'_{he}(x)$. 
Otherwise, $A'_{he}(-y) (A'_{he}(-y) - \alpha) = 0$ by~\eqref{hee_0}.
Since $A'_{he}(0) = \alpha$, we derive that $A'_{he}(-y) = \alpha$.
By~\eqref{cond-full}, we are done for the case $\{q,l\} = \{e,h\}$.

The equalities \eqref{fff} and \eqref{hff} for $x = 0$ equal
\begin{gather} 
0 = A'_{ff}(-y) (A'_{hf}(-y) - A'_{fh}(-y) -  2\gamma), \label{fff_0} \\
0 = A'_{ff}(-y) (A'_{hh}(-y) - 2 \zeta + \beta/2) 
- A'_{hf}(-y) (A'_{hf}(-y) - \gamma). \label{hff_0}
\end{gather}
We analogously obtain the statement for the case $\{q,l\} = \{f,h\}$.

Finally, we prove that $A'_{ef}(x) - A'_{ef}(0) = A'_{fe}(x) - A'_{fe}(0)$. For this, consider 
\eqref{fee} and \eqref{ehh} for $x = 0$ applying \eqref{cond-full},
\begin{gather}
0 =  A'_{fh}(- y) A'_{ee}(- y)  
+ (A'_{ef}(y) - 4\zeta + \beta) A'_{he}(- y), \label{a} \\
0 = A'_{ee}(- y) (A'_{fh}(- y) + \gamma)  
- (A'_{ef}(- y) - 2 \zeta) (A'_{eh}(- y) + \alpha)
+ 2 \alpha (A'_{hh}(- y) - \zeta), \label{b}
\end{gather}
Put $-y$ instead of~$y$ in~\eqref{a},~\eqref{b} to get by \eqref{cond-full} and by the already proved cases of Lemma (below we leave $A'_{ef}$ not applying~\eqref{cond-full}):
\begin{gather}
0 =  (A'_{fh}(- y) + 2\gamma) A'_{ee}(- y) 
 - (A'_{ef}(- y) - 4\zeta + \beta) (A'_{he}(- y) - 2\alpha), \label{a'} \\
0 = A'_{ee}(- y) (A'_{fh}(- y) + \gamma)  
 + (A'_{ef}(y) - 2\zeta) (A'_{eh}(- y) + \alpha)
 - 2 \alpha (A'_{hh}(- y) {-} \zeta).  \label{b'}
\end{gather} 
Then add \eqref{a} to \eqref{a'} and \eqref{b} to \eqref{b'}:
\begin{multline} \label{a''}
0 = 2 (A'_{fh}(- y) + \gamma) A'_{ee}(- y)\\
- (A'_{ef}(- y) - A'_{ef}(y)) (A'_{he}(- y) - \alpha) 
+ \alpha (A'_{ef}(- y) + A'_{ef}(y) - 8\zeta + 2\beta),  
\end{multline}
\begin{equation} \label{b''}
0 = 2 A'_{ee}(- y) (A'_{fh}(- y) + \gamma)  
+ (A'_{ef}(y) - A'_{ef}(- y)) (A'_{eh}(- y) + \alpha).
\end{equation} 

Subtracting~\eqref{a''} from~\eqref{b''}, we get
$$
\alpha(A'_{ef}(- y) + A'_{ef}(y) - 8\zeta + 2\beta) = 0.
$$
If $\alpha\neq0$, we finish the proof by~\eqref{cond-full}. 
If $\alpha = 0$, then subtract \eqref{b} from \eqref{b'}
to obtain 
$$
A'_{eh}(-y)(A'_{ef}(- y) + A'_{ef}(y) - 4\zeta) = 0. 
$$
If $A'_{eh}\not\equiv0$, then we get the equality
$A'_{ef}(y) = A'_{fe}(y)$ by~\eqref{cond-full} and, hence, 
$A'_{ef}(0) = A'_{fe}(0)$, i.\,e., $\beta = 2\zeta$. 
Thus, 
$$
A'_{ef}(y) - A'_{ef}(0) 
 = A'_{ef}(y) - \beta 
 = A'_{fe}(y) - \beta
 = A'_{fe}(y) - A'_{fe}(0),
$$
as required.
If $A'_{eh}\equiv0$, then we take~\eqref{efh-shift} with $y = 0$ and get
\begin{equation}\label{lalala}
A'_{ee}(-x)A'_{ff}(x) - A'_{ef}(-x)A'_{fe}(x) + \beta(4\zeta-\beta) = 0.
\end{equation}
This relation considered for $x$ and $-x$ by~\eqref{cond-full} implies
$$
(A'_{ef}(x)-A'_{ef}(-x))(A'_{ef}(- y) + A'_{ef}(y) - 4\zeta) = 0.
$$
Above we have shown that $A'_{ef}(- y) + A'_{ef}(y) - 4\zeta = 0$ gives the required equality. 

Suppose that $A'_{ef}(x) = A'_{ef}(-x)$.
Let us show that $A'_{fe}(x)$ is constant, it is sufficient to finish the proof. 
Consider \eqref{hee} for $x + y = 0$. 
Applying \eqref{cond-full}, we obtain
$$
0 = A'_{ee}(x) (A'_{ef}(x) - 2\zeta).
$$
If $A'_{ee}(x) \not\equiv 0$, then $A'_{ef}(x) = 2\zeta$.
If $A'_{ee}(x)\equiv 0$, then $A'_{ef}(x)$ is constant by~\eqref{lalala}.
\end{proof}

\begin{corollary}\label{odd}
We have 
$A'_{ql}(- x) - A'_{ql}(0) = -(A'_{ql}(x) - A'_{ql}(0))$
for all $q, l \in \{e, f, h\}$. 
In other words, $A'_{ql}(x) - A'_{ql}(0)$ is an odd polynomial.
\end{corollary}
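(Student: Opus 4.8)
The plan is to deduce the corollary directly from Lemma~\ref{symm} together with the symmetry relations~\eqref{cond-full}, with essentially no further computation. I would write $B_{ql}(x) := A'_{ql}(x) - A'_{ql}(0)$, so that the assertion is exactly that every $B_{ql}$ is an odd polynomial.

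The first step is to recast~\eqref{cond-full} in a single uniform shape. I would observe that each of the six listed relations has the form $A'_{ql}(x) + A'_{lq}(-x) = c_{ql}$ for a constant $c_{ql}$, and that, by evaluating at $x = 0$, this constant is forced to equal $A'_{ql}(0) + A'_{lq}(0)$. A quick check against~\eqref{eq:A'ql(0)} confirms the match: the right-hand constants $0$, $4\zeta$, $0$, $0$, $0$, $2\zeta$ appearing in~\eqref{cond-full} are precisely the sums $A'_{ee}(0)+A'_{ee}(0)$, $A'_{fe}(0)+A'_{ef}(0)$, $A'_{he}(0)+A'_{eh}(0)$, and so on. Hence for every pair $q,l$ we may write
\[
A'_{ql}(x) + A'_{lq}(-x) = A'_{ql}(0) + A'_{lq}(0),
\]
which in terms of $B$ reads $B_{ql}(x) + B_{lq}(-x) = 0$.

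The second step invokes Lemma~\ref{symm}, whose content is exactly $B_{ql}(x) = B_{lq}(x)$ for all $x$; replacing $x$ by $-x$ gives $B_{lq}(-x) = B_{ql}(-x)$. Substituting this into the relation from the previous step yields $B_{ql}(x) + B_{ql}(-x) = 0$, that is, $B_{ql}$ is odd, which is the claim. (The diagonal cases $q = l$ are of course already contained in this argument, since for them~\eqref{cond-full} directly asserts oddness of $B_{qq}$.)

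There is no serious obstacle here: all the genuine work resides in Lemma~\ref{symm} (and in establishing~\eqref{cond-full} and~\eqref{eq:A'ql(0)}), while the corollary is a purely formal combination of those facts. The only point requiring a moment of care is the uniform rewriting of~\eqref{cond-full}, where one must verify that the scattered constants on the right-hand sides are exactly the values forced at $x = 0$ by~\eqref{eq:A'ql(0)}; once that bookkeeping is in place, the oddness falls out in two lines.
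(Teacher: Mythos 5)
Your proposal is correct and is exactly the paper's argument: the paper's proof is the one-line instruction to combine Lemma~\ref{symm} with~\eqref{cond-full}, and your write-up simply spells out the two-line substitution (noting that the constants in~\eqref{cond-full} are forced by evaluation at $\lambda=0$). No discrepancies.
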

\begin{proof}
Combine Lemma~\ref{symm} and the equations~\eqref{cond-full}.
\end{proof}

In the light of Lemma~\ref{symm} and Corollary~\ref{odd}, let us introduce the following
presentation of the polynomials for $q, l \in \{e, f, h\}$:
$$
A'_{ql}(x) - A'_{ql}(0) = a_{ql} x f_{ql}(x^2),
$$
where $f_{ql}(x^2)$ is a unitary polynomial.
Writing scalars $a_{ql}$ in the matrix $M\in M_3(\mathbb{C})$ due to the order $e,f,h$, we get a~symmetric one.

\begin{lemma} \label{Mff}
The following equalities hold:
\begin{gather} 
a_{he} A'_{ee}(x) = a_{ee} (A'_{he}(x) - \alpha), \quad
a_{he} (A'_{hh}(x) - \zeta) = a_{hh} (A'_{he}(x) - \alpha), \label{ee=he} \\
a_{ee} a_{hh} = a_{he}^2, \quad 
(2\zeta - \beta) a_{ee} = 2 \alpha a_{he}, \quad 
2\alpha a_{hh} = (2 \zeta - \beta) a_{he}. \label{ee=hecoeff}
\end{gather}
\end{lemma}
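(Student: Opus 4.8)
The plan is to substitute the normalized presentation $A'_{ql}(x)-A'_{ql}(0)=a_{ql}xf_{ql}(x^2)$, the values \eqref{eq:A'ql(0)}, and the symmetry \eqref{cond-full} into the two projections \eqref{eee} and \eqref{hee} --- precisely the relations living in the $e$--$h$ corner and the scalars $\alpha,\beta,\zeta$. Writing the odd polynomials $P=A'_{ee}$, $Q=A'_{he}-\alpha$, $R=A'_{hh}-\zeta$, $S=A'_{fe}-\beta$ and recording that \eqref{cond-full} gives $A'_{eh}(x)=Q(x)-\alpha$ and $A'_{ef}(x)=S(x)+(4\zeta-\beta)$, both equations turn into polynomial identities among $P,Q,R,S$ and the constants. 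This first step is pure bookkeeping.

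Next I would extract the scalar relations \eqref{ee=hecoeff}. Setting $x=0$ in \eqref{hee} collapses it to $-(2\zeta-\beta)P+2\alpha Q+2RP-2Q^2=0$; splitting into its odd and even parts gives the two clean identities $(2\zeta-\beta)P=2\alpha Q$ and $RP=Q^2$. Because every cofactor $f_{ql}$ is monic, comparing leading coefficients yields $(2\zeta-\beta)a_{ee}=2\alpha a_{he}$ and $a_{ee}a_{hh}=a_{he}^2$ at once. Multiplying the former by $a_{hh}$ and inserting the latter gives $a_{he}\big((2\zeta-\beta)a_{he}-2\alpha a_{hh}\big)=0$, so $2\alpha a_{hh}=(2\zeta-\beta)a_{he}$ as soon as $a_{he}\neq0$. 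When $a_{he}=0$ we have $Q\equiv0$, hence $RP\equiv0$; if $R\equiv0$ this relation is trivial, and if instead $P\equiv0$ I would specialize \eqref{ehh} at $x=0$, which (using $P\equiv Q\equiv0$) reduces to $\alpha R\equiv0$ and forces $\alpha a_{hh}=0$, closing the case.

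It then remains to promote these leading-coefficient statements to the genuine polynomial identities \eqref{ee=he}, i.e. to prove that $P,Q,R$ are actually proportional rather than merely matched at top degree. If $\alpha\neq0$ this is immediate: $(2\zeta-\beta)P=2\alpha Q$ already displays $Q$ as a scalar multiple of $P$, and then $RP=Q^2$ displays $R$ as a scalar multiple of $P$ as well (for $P\not\equiv0$; the case $P\equiv0$ being trivial), so all three share the cofactor $f_{ee}$ and \eqref{ee=he} follows. The branch $\alpha=0$, $\beta\neq2\zeta$ is equally painless, since then $(2\zeta-\beta)P=0$ forces $P\equiv0$ and hence $Q\equiv0$.

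The hard part will be the single remaining branch $\alpha=0$, $\beta=2\zeta$. Here the odd identity $(2\zeta-\beta)P=2\alpha Q$ degenerates to $0=0$, while $RP=Q^2$ together with $a_{ee}a_{hh}=a_{he}^2$ only gives $f_{ee}f_{hh}=f_{he}^2$ --- not enough to conclude that the cofactors coincide. Indeed the one-variable specializations of \eqref{eee} admit spurious non-proportional solutions such as $P\propto x$, $Q\propto x^3$, $R\propto x^5$, so no amount of $x=0$ information will suffice. To rule these out I would return to the full two-variable identity \eqref{hee} (if necessary reinforced by \eqref{eee} and \eqref{efh-shift}): after imposing $\alpha=0$, $\beta=2\zeta$ and $RP=Q^2$, the coefficient of the mixed monomial $xy$ cannot be matched by any odd coupling polynomial $S=A'_{fe}-\beta$, and this cross-term obstruction is exactly what forces $f_{ee}=f_{he}=f_{hh}$ and hence the proportionality. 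Carrying out this coefficient comparison is the genuinely technical core of the lemma.
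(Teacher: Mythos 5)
Your derivation of the scalar relations \eqref{ee=hecoeff} is correct and essentially identical to the paper's: the even/odd split of \eqref{hee} at $x=0$ into $RP=Q^2$ and $(2\zeta-\beta)P=2\alpha Q$ is exactly the paper's pair (its ``even part of (11)'' and its identity $(2\zeta-\beta)A'_{ee}(y)=2\alpha(A'_{he}(y)-\alpha)$, obtained there by adding the $y$ and $-y$ instances), and your closing of the $a_{he}=0$ subcase via \eqref{ehh} matches the paper's use of \eqref{ehh} at $y=0$. The easy branches of \eqref{ee=he} ($\alpha\neq0$, or $\alpha=0$ with $\beta\neq2\zeta$) are also handled correctly and in the same way.

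The genuine gap is the branch $\alpha=2\zeta-\beta=0$, which you correctly isolate as the crux and correctly observe cannot be settled by one-variable specializations (your example $P\propto x$, $Q\propto x^3$, $R\propto x^5$ does satisfy $RP=Q^2$), but then only assert that ``the coefficient of the mixed monomial $xy$ cannot be matched by any odd coupling polynomial $S$.'' This is a plan, not a proof: you never exhibit the obstruction for a general non-proportional triple $(P,Q,R)$ with $RP=Q^2$, and the specific monomial $xy$ is not even the right one --- in your own example the two-variable identity $P(u)S(v)-S(u)P(v)=2R(u+v)(P(u)-P(v))-2Q(u+v)(Q(u)-Q(v))$ forces $S$ to kill a term proportional to $u^4v^2$, which no expression of the form $uS(v)-vS(u)$ can produce; for general $P,Q,R$ identifying such a term requires real work. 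The paper spends the bulk of its proof precisely here, and by quite different means: it picks a nonzero root $\tau$ of $A'_{ee}$ of maximal modulus, uses \eqref{eee} at $x=\tau$ to show the rational function $A'_{he}/A'_{ee}$ is $\tau$-periodic and hence pole-free, concluding $A'_{he}=\delta A'_{ee}$; it then treats the pure-monomial cases $A'_{ee}=a_{ee}y^k$, $A'_{he}=a_{he}y^n$ separately, deriving $\binom{k}{n}=3$ from an $n$-th derivative of \eqref{eee} and eliminating $(k,n)=(3,1),(1,3)$ by a coefficient computation in \eqref{hee}. Until you either carry out your cross-term comparison in full generality or import an argument of this kind, the lemma is not proved.
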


\begin{proof}
The first equality of~\eqref{ee=hecoeff} comes from the even part of~\eqref{hee_0},
which equals 
$$
0 = y^2(a_{ee}a_{hh}f_{ee}(y^2)f_{hh}(y^2) - a_{he}^2f_{he}(y^2)).
$$
From~\eqref{hee_0}, we have by Corollary~\ref{odd},
\begin{equation}\label{eq:Lemma2eehh}
0 = A'_{ee}(y) (A'_{hh}(-y) - \beta/2) - A'_{he}(-y) (A'_{he}(y) - \alpha).
\end{equation}
Applying~\eqref{hee_0} with $-y$ and Corollary~\ref{odd}, we deduce
\begin{multline}\label{eq:Lemma2Long}
0 = A'_{ee}(y) (A'_{hh}(y) - \beta/2) 
 - A'_{he}(y)(A'_{he}(y) - \alpha) \\
 = A'_{ee}(y) (- A'_{hh}(-y) + 2 \zeta - \beta/2) 
 + (A'_{he}(-y) - 2\alpha) (A'_{he}(y) - \alpha). 
\end{multline}

The sum of~\eqref{eq:Lemma2eehh} and~\eqref{eq:Lemma2Long} gives us 
\begin{equation}\label{Aee=Ahe}
(2 \zeta - \beta)A'_{ee}(y) = 2 \alpha (A'_{he}(y) - \alpha).
\end{equation}
Hence, we prove the second equality from~\eqref{ee=hecoeff}.
It remains to check~\eqref{ee=he} and the last equality of~\eqref{ee=hecoeff}.

Application of the first two equations in \eqref{ee=hecoeff} gives 
$(2\zeta - \beta) a_{he}^2 = (2\zeta - \beta) a_{ee} a_{hh} = 2 \alpha a_{he}
a_{hh}$, hence, $a_{he}((2\zeta - \beta) a_{he} - 2 \alpha a_{hh}) = 0$. Then
either we are done with~\eqref{ee=hecoeff} or $a_{he} = 0$. 
In the latter case,
it remains to study the case when $a_{ee} = 0$. 

Suppose that $a_{he} = 0$, equivalently, $A'_{he}(y)-\alpha\equiv0$.
Then~\eqref{ee=he} is fulfilled.
Consider~\eqref{ehh} at $y = 0$:
$$
0 = 2\alpha(\zeta - A'_{hh}(-x)) + A'_{ee}(-x)(A'_{hf}(x)-\gamma) 
 = 2\alpha a_{hh}xf_{hh}(x^2) - a_{ee}a_{hf}x^2f_{ee}(x^2)f_{hf}(x^2).
$$
Now, we finish the proof of the third equality of~\eqref{ee=hecoeff}, since the remaining case $a_{he} = a_{ee} = 0$ implies $\alpha a_{hh} = 0$.

If $A'_{ee}(x)\equiv 0$, then by~\eqref{hee_0} we get 
$A'_{he}(-y)-\alpha\equiv0$, the already studied case. 

Now, we may suppose that $A'_{ee}(x)\not\equiv 0$ and $A'_{he}(y)-\alpha\not\equiv0$.
By~\eqref{Aee=Ahe}, either $\alpha = 2 \zeta - \beta = 0$ or 
$\alpha$, $2 \zeta - \beta \neq0$.

If $\alpha$, $2 \zeta - \beta \neq0$, then 
the equality~\eqref{Aee=Ahe} and its substitution into~\eqref{hee_0}
give us by~\eqref{ee=hecoeff} both equalities~\eqref{ee=he}.

Suppose that $\alpha = 2 \zeta - \beta = 0$.
Assume that there exists a nonzero root of~$A'_{ee}(y)$, say $\tau$.
Since $A'_{ee}(y) = a_{ee}yf_{ee}(y^2)$, $-\tau$ is also a~root of~$A'_{ee}(y)$. 
By~\eqref{hee_0}, $\pm\tau$ are roots of $A'_{he}(y)$. 
Let us choose $\tau$ as a root of $A'_{ee}(y)$ of maximal modulus.
Now, consider~\eqref{eee} for $x = \tau$:
$$
0 = A'_{ee}(- y)A'_{he}(- \tau - y)
  - A'_{ee}(- \tau - y)A'_{he}(- y),
$$
which can be rewritten as 
$$
g(y) = \frac{A'_{he}(y)}{A'_{ee}(y)} = \frac{A'_{he}(\tau + y)}{A'_{ee}(\tau + y)}.
$$
If 0~is a~pole of $g(y)$, then so is $\tau$. But if $\tau$ is a pole of $g(y)$, 
then so is $2\tau$, a contradiction with the choice of $\tau$.
If $(0\neq)\kappa$ is a~pole of~$f(y)$, then $|\tau+\kappa|>|\tau|$ or $|\tau-\kappa|>|\tau|$, which leads us again to a~contradiction. 
So, $g(y)$ is a~polynomial, it means that $A'_{he}(y) = \delta A'_{ee}(y)$ for some $\delta\in\mathbb{C}$. By the form of $A'_{ql}(y)$, we have $\delta a_{ee} = a_{he}$, it proves the first equality of~\eqref{ee=he} by~\eqref{ee=hecoeff}. 
The substitution of $A'_{he}(y) = \delta A'_{ee}(y)$ into~\eqref{hee_0}
give us the second equality from~\eqref{ee=he}.

Now, we consider the case when $A'_{ee}(y)$ has no roots except~0, it means that 
$A'_{ee}(y) = a_{ee} y^k$, $k$ is odd.
Suppose that there exists a nonzero root of~$A'_{he}(y)$, say $\tau$. 
By~\eqref{hee_0}, $\pm\tau$ are roots of $A'_{hh}(y) - \zeta$ as well. 
We write down~\eqref{hee} for $x+y = - \tau$, 
$$
0 = A'_{ee}(y+\tau) (A'_{ef}(y) - \beta) - A'_{ee}(y) (A'_{ef}(y+\tau) - \beta).
$$
In the same manner as above, we derive that $A'_{ef}(y) - \beta = a_{ef} y^k$.
Hereby, \eqref{ehh} induces for $x = 0$ the following relations
$$
a_{ee} y^k (A'_{fh}(y) +\gamma)
 =  A'_{ee}(y) (A'_{fh}(y) +\gamma)
 = (A'_{ef}(y) - \beta) A'_{eh}(y) 
 = a_{ef} y^k A'_{eh}(y).
$$
So, substitute the old relations and the new one $A'_{fh}(y) = (a_{ef}/a_{ee})A'_{eh}(y) - \gamma$ in~\eqref{ehh} to obtain
$$
(A'_{hh}(-x) - \zeta) A'_{eh}(-y) = A'_{eh}(-x) (A'_{hh}(-y) - \zeta).
$$
We may derive that $A'_{hh}(y)-\zeta = (a_{hh}/a_{eh})A'_{eh}(y)$, the second
equality of~\eqref{ee=he}, whereas substituting it into \eqref{eq:Lemma2eehh}
give us the rest.

The last case is when $A'_{ee}(y) = a_{ee}y^k$, $A'_{he}(y) = a_{he} y^n$, and $k$, $n$ are odd.
If $k = n$, then the first equality of~\eqref{ee=he} follows. Substituting it into~\eqref{eq:Lemma2eehh}, we get the second equality of~\eqref{ee=he}.
Suppose that $k > n$. Evaluate 
the $n$th partial derivation $\frac{\partial^n}{\partial (-x)^n}$ of \eqref{eee} at $x = 0$:
$$
0 = n!a_{he}a_{ee}y^k\left(-3+\binom{k}{n}\right). 
$$
Hence, $\binom{k}{n} = 3$, which for odd integers $k,n$ implies $k = 3$, $n = 1$. 
We arrive at a~contradiction with~\eqref{eq:Lemma2eehh}.

If $k<n$, then we analogously evaluate 
the $k$th partial derivation $\frac{\partial^k}{\partial (-y)^k}$ of \eqref{eee} at $y = 0$ to get $\binom{n}{k} = 3$, thus, similarly, $n = 3$, $k = 1$.
Therefore, \eqref{eq:Lemma2eehh} gives us $A'_{hh}(y) = a_{hh} y^5 +\beta/2$, and we get by~\eqref{hee}
\begin{multline} \label{finishHim}
0 = - a_{ee}x A'_{fe}(- y) 
+ A'_{fe}(- x) a_{ee}y 
+ 2 a_{hh}a_{ee}(x + y)^5y - \beta a_{ee}y \\
- 2 a_{he}^2(x + y)^3y^3  
- 2 a_{hh}a_{ee}(x + y)^5 x +\beta a_{ee}x 
+ 2 a_{he}^2(x + y)^3 x^3.
\end{multline} 
The coefficient at $x^2y^4$ in~\eqref{finishHim} equals 
$10a_{hh}a_{ee} - 6a_{he}^2 = 4a_{hh}a_{ee}\neq0$, a~contradiction.
\end{proof}

Within quite the same approach the analogous assertion can be proven.
The equalities listed below may be also derived with the help of the automorphism $\Psi$~\eqref{PsiAut} of $\Cur(\sl_2(\mathbb{C}))$.
For this, we apply the relation
$2\zeta - \beta = (A'_{ef}(0)-A'_{fe}(0))/2$.

\begin{lemma} \label{Mee}
The following equalities hold:
\begin{gather} 
a_{hf} A'_{ff}(x) = a_{ff} (A'_{hf}(x) - \gamma), \quad
a_{hf} (A'_{hh}(x) - \zeta) = a_{hh} (A'_{hf}(x) - \gamma), \label{ff=hf} \\
a_{ff} a_{hh} = a_{hf}^2, \quad (2\zeta - \beta) a_{ff} =
- 2 \gamma a_{hf}, \quad - 2 \gamma a_{hh} = (2 \zeta - \beta) a_{hf}. \label{ff=hfcoeff}
\end{gather}
\end{lemma}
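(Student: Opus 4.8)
The plan is to deduce Lemma~\ref{Mee} from Lemma~\ref{Mff} by applying the automorphism $\Psi$ of $\Cur(\sl_2(\mathbb{C}))$ from~\eqref{PsiAut}, as the note preceding the statement suggests, rather than repeating the long case analysis. By the remark on automorphisms of $\Cur(g)$ in \S2.2, if $r = \sum A_{ql}(\partial_{\otimes 1},\partial_{\otimes 2})\, q\otimes l$ solves the weak CCYBE, then so does $\Psi(r)$. First I would record the effect of $\Psi$ on the coefficient tensor. Since $\Psi$ swaps $e\leftrightarrow f$ and sends $h\mapsto -h$, writing $\Psi(r) = \sum \tilde A_{ql}(\partial_{\otimes 1},\partial_{\otimes 2})\, q\otimes l$ and restricting to the diagonal $A'_{ql}(x) = A_{ql}(x,-x)$ gives
$$
\tilde A'_{ee} = A'_{ff},\quad \tilde A'_{ff} = A'_{ee},\quad \tilde A'_{hh} = A'_{hh},\quad \tilde A'_{he} = -A'_{hf},\quad \tilde A'_{hf} = -A'_{he},
$$
together with $\tilde A'_{ef} = A'_{fe}$ and $\tilde A'_{fe} = A'_{ef}$; the signs on the $h$-entries come from $\Psi(h) = -h$.

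Next I would translate the scalar data of $\Psi(r)$ via~\eqref{eq:A'ql(0)}: one finds $\tilde\alpha = \tilde A'_{he}(0) = -\gamma$, $\tilde\zeta = \zeta$, and $\tilde\beta = \tilde A'_{fe}(0) = A'_{ef}(0) = 4\zeta - \beta$. The crucial bookkeeping point is the behaviour of the combination $2\zeta - \beta$: rewriting it as $2\zeta - \beta = (A'_{ef}(0) - A'_{fe}(0))/2$ makes the sign flip transparent, since $\Psi$ interchanges $A'_{ef}$ and $A'_{fe}$ on the diagonal, whence $2\tilde\zeta - \tilde\beta = -(2\zeta - \beta)$. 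For the leading coefficients in the presentation $A'_{ql}(x) - A'_{ql}(0) = a_{ql}\,x\,f_{ql}(x^2)$, the monic normalization of $f_{ql}$ forces $\tilde a_{ee} = a_{ff}$, $\tilde a_{hh} = a_{hh}$, and $\tilde a_{he} = -a_{hf}$ (the last coming from $\tilde A'_{he} - \tilde A'_{he}(0) = -(A'_{hf} - \gamma) = -a_{hf}\,x\,f_{hf}(x^2)$).

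Then I would simply feed $\Psi(r)$ into Lemma~\ref{Mff}. Substituting the tilde quantities into the two identities~\eqref{ee=he} for $\Psi(r)$ and clearing the overall signs yields exactly the polynomial relations~\eqref{ff=hf}; for instance $\tilde a_{he}\tilde A'_{ee} = \tilde a_{ee}(\tilde A'_{he} - \tilde\alpha)$ becomes $-a_{hf}A'_{ff} = -a_{ff}(A'_{hf} - \gamma)$. Likewise the three scalar relations~\eqref{ee=hecoeff} for $\Psi(r)$ transcribe into $a_{ff}a_{hh} = a_{hf}^2$, $(2\zeta-\beta)a_{ff} = -2\gamma a_{hf}$, and $-2\gamma a_{hh} = (2\zeta-\beta)a_{hf}$, which are precisely~\eqref{ff=hfcoeff}.

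The main, and essentially the only, obstacle is the careful sign bookkeeping: one must track the three independent sources of signs consistently, namely the factor $\Psi(h) = -h$ that negates every $h$-entry of the tensor, the induced sign $\tilde a_{he} = -a_{hf}$ in the monic presentation, and the reversal $2\tilde\zeta - \tilde\beta = -(2\zeta - \beta)$. Once these are pinned down, each instance of Lemma~\ref{Mff} for $\Psi(r)$ transcribes term by term into the asserted statement for $r$. Alternatively, the same equalities follow by repeating the proof of Lemma~\ref{Mff} verbatim with the pair $(e,h)$ replaced by $(f,h)$, starting from~\eqref{fff}, \eqref{hff}, \eqref{ehh} and~\eqref{fhh} in place of~\eqref{eee} and~\eqref{hee}; the automorphism route merely spares us reproducing that computation.
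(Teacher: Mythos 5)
Your proposal is correct and follows exactly the route the paper indicates: the paper gives no written proof of this lemma but states that it follows either by repeating the argument of Lemma~\ref{Mff} or via the automorphism $\Psi$ from~\eqref{PsiAut} together with the relation $2\zeta-\beta=(A'_{ef}(0)-A'_{fe}(0))/2$, which is precisely the sign bookkeeping you carry out. Your transcription of the tilde quantities ($\tilde\alpha=-\gamma$, $\tilde a_{he}=-a_{hf}$, $2\tilde\zeta-\tilde\beta=-(2\zeta-\beta)$) checks out and converts each equality of Lemma~\ref{Mff} into the corresponding one of Lemma~\ref{Mee}.
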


Before stating the main result, we need one lemma.

\begin{lemma} \label{Mhh}
The following equalities hold:
\begin{gather} 
a_{fe} A'_{ee}(x) = a_{ee} (A'_{fe}(x) - \beta),
\quad a_{fe} A'_{ff}(x) = a_{ff} (A'_{fe}(x) - \beta),\label{ee=fe} \\
a_{ee} a_{ff} = a_{fe}^2, \quad \gamma a_{ee} = 
- \alpha a_{fe}, \quad \alpha a_{ff} = - \gamma a_{fe}.\label{ee=fecoeff}
\end{gather}
\end{lemma}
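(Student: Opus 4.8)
The plan is to mirror the strategy already used in Lemma~\ref{Mff}, exploiting the structural parallel between the pair $(e,h)$ and the pair $(e,f)$. The relevant defining equations are now \eqref{eee} (at $x=0$) and \eqref{fee} (at $x=0$), which after simplification via \eqref{cond-full} should yield relations analogous to \eqref{eee_0} and \eqref{hee_0} but relating $A'_{ee}$, $A'_{ff}$, and $A'_{fe}$. More directly, one observes that the automorphism bridging these situations is not $\Psi$ from \eqref{PsiAut} but the one interchanging the roles that lead from $\{e,h\}$-type relations to $\{e,f\}$-type relations; alternatively, the relation \eqref{ee=fe} should be extracted from \eqref{hee} at $x=0$, which gave \eqref{hee_0}, together with its $f\otimes f$-analogue \eqref{hff} at $x=0$, i.e.\ \eqref{hff_0}. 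First I would record the even and odd parts of these specialized equations using Corollary~\ref{odd} and the presentation $A'_{ql}(x)-A'_{ql}(0)=a_{ql}xf_{ql}(x^2)$.

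The key computational steps, in order, are as follows. First, extract $a_{ee}a_{ff}=a_{fe}^2$ as the even part of the appropriate specialized equation, exactly as $a_{ee}a_{hh}=a_{he}^2$ arose from the even part of \eqref{hee_0}. Second, produce the analogue of \eqref{Aee=Ahe}: by adding an equation to its $y\to -y$ reflection (using Corollary~\ref{odd}), I expect a linear relation of the shape $\gamma\,A'_{ee}(y)=-\alpha\,(A'_{fe}(y)-\beta)$, which immediately gives the scalar identity $\gamma a_{ee}=-\alpha a_{fe}$, the second equality of \eqref{ee=fecoeff}. Third, combine the first two scalar identities multiplicatively to deduce $a_{fe}(\gamma a_{fe}+\alpha a_{ff})=0$, so that either the third identity $\alpha a_{ff}=-\gamma a_{fe}$ holds or $a_{fe}=0$. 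The functional identities \eqref{ee=fe} should then follow by substituting the linear relation back into \eqref{hee_0} (respectively its $f$-analogue), just as \eqref{ee=he} followed from substituting \eqref{Aee=Ahe} into \eqref{hee_0}.

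The main obstacle, as in Lemma~\ref{Mff}, is the degenerate branch where the leading coefficients vanish, forcing a case analysis on the monomial structure of the polynomials. When $a_{fe}=0$ one must separately treat $a_{ee}=0$, and when both the linear relation degenerates (the case $\alpha=\gamma=0$, or $\gamma a_{ee}=-\alpha a_{fe}=0$ with neither polynomial identically zero) one cannot simply divide. Here I would reuse the root-chasing argument from Lemma~\ref{Mff}: assuming $A'_{ee}$ and $A'_{fe}-\beta$ are not proportional, pick a nonzero root of maximal modulus of one of them and use the shifted equation \eqref{eee} (or \eqref{fee}) in the form $g(y)=A'_{fe}(y)/A'_{ee}(y)=g(\tau+y)$ to force $g$ to be a polynomial, hence proportionality and the identities \eqref{ee=fe}. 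The purely-monomial subcase $A'_{ee}(y)=a_{ee}y^k$, $A'_{fe}(y)-\beta=a_{fe}y^n$ with $k\neq n$ is then eliminated by the same partial-derivative/binomial-coefficient computation: evaluating the relevant derivative of \eqref{eee} at the origin yields a binomial constraint $\binom{k}{n}=3$ (or $\binom{n}{k}=3$), which for odd exponents forces the degree pair $\{3,1\}$ and contradicts the previously derived even-part relation, closing the argument.
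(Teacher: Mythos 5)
There is a genuine gap at the central step. The equations you name as the source of \eqref{ee=fe} cannot produce it: in \eqref{hee} the terms containing $A'_{fe}$ are multiplied by $A'_{ee}(0)=0$ once you set $x=0$, so \eqref{hee_0} involves only $A'_{ee},A'_{he},A'_{hh}$; likewise \eqref{hff_0} involves only $A'_{ff},A'_{hf},A'_{hh}$, and \eqref{eee} contains no $A'_{fe}$ or $A'_{ff}$ at all. So neither the ``even part'' of these specializations nor a root-chasing/binomial argument on \eqref{eee} can say anything about $a_{fe}$ or about the proportionality of $A'_{ee}$ and $A'_{fe}-\beta$. The structural parallel with Lemma~\ref{Mff} genuinely breaks here: since $[e,f]=h$, every defining equation coupling $A'_{ee}$ with $A'_{fe}$ (namely \eqref{hee}, \eqref{fee}) drags in the $h$-indexed polynomials, and no automorphism of $\sl_2(\mathbb{C})$ sends the pair $\{e,h\}$ to $\{e,f\}$. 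The paper's actual mechanism is to keep \eqref{hee} as a two-variable identity and use the already-proved relations \eqref{ee=he}, \eqref{ee=hecoeff} (and, for the second half, Lemma~\ref{Mee} on \eqref{hff}) to show that the four $h$-terms cancel identically, leaving precisely the Wronskian-type identity \eqref{ee-feKey}, $A'_{ee}(-x)(A'_{fe}(-y)-\beta)=(A'_{fe}(-x)-\beta)A'_{ee}(-y)$, from which proportionality is immediate with no root-chasing needed. That cancellation is the key idea of the proof and is absent from your plan.

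The remaining scalar identities are also sourced differently from what you propose. The relations $\gamma a_{ee}=-\alpha a_{fe}$ and $\alpha a_{ff}=-\gamma a_{fe}$ come from evaluating \eqref{fee} and \eqref{eff} at $x+y=0$ (your ``add to the $y\to-y$ reflection'' heuristic is in the right spirit but you never identify an equation that works). For $a_{ee}a_{ff}=a_{fe}^2$, no specialization of \eqref{eee}--\eqref{hhh} contains the products $A'_{ee}A'_{ff}$ or $A'_{ef}A'_{fe}$; the only source is \eqref{efh} (in the corrected form \eqref{efh-shift}, at $y=0$), and the paper reaches it via the intermediate minors \eqref{TwoMinorsZero} from \eqref{fee}, \eqref{eff} at $x=0$, falling back on the leading coefficient of \eqref{efh-shift}$\rvert_{y=0}$ only in the degenerate case $a_{he}a_{hf}=0$. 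Your multiplicative step deducing the third identity from the first two is fine, but as written the proposal does not reach the first two.
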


\begin{proof}
Consider \eqref{hee} and apply \eqref{ee=he} to the last two terms:
\begin{multline}
2(A'_{hh}(- x - y) A'_{ee}(x)  
 - A'_{he}(- x - y) A'_{eh}(x)) \\ 
 = 2 (-a_{hh}(x + y)f_{hh}((x+y)^2) + \zeta)
 a_{ee}xf_{ee}(x^2) \\
 - 2(-a_{he}(x + y)f_{he}((x+y)^2) + \alpha) (a_{eh}xf_{eh}(x^2) - \alpha) \\
 = 2(- a_{hh} a_{ee} (x+y)xf_{hh}((x+y)^2)f_{ee}(x^2) + a_{he}^2 (x+y)xf_{eh}((x+y)^2)f_{eh}(x^2)) \\
 - 2\alpha a_{he}(x+y)f_{eh}((x+y)^2) 
  + 2\zeta a_{ee}xf_{ee}(x^2) - 2\alpha a_{he} xf_{eh}(x^2) +2\alpha^2.
\end{multline}
Let us show that the sum of the first two terms is zero.
If $a_{eh}\neq0$, then by~\eqref{ee=he} we have $f_{ee} = f_{eh} = f_{hh}$ and the sum equals zero due to~\eqref{ee=hecoeff}. 
If $a_{eh} = 0$, then $a_{ee}a_{hh} = 0$ by~\eqref{ee=hecoeff} and both summands are zero. 

Analogously, the middle terms of~\eqref{hee} give us $$
2\alpha a_{he}(x+y)f_{eh}((x+y)^2) - 2\zeta a_{ee}yf_{ee}(y^2) + 2\alpha a_{he}yf_{eh}(y^2) - 2\alpha^2.
$$
We want to derive the equality
\begin{equation} \label{ee-feKey}
0 = A'_{ee}(- x)(A'_{fe}(- y) - \beta) - (A'_{fe}(- x) - \beta) A'_{ee}(- y).
\end{equation}
If $a_{ee} = 0$, then it holds trivially.
Otherwise, we rewrite \eqref{hee} by~\eqref{ee=hecoeff} as follows,
\begin{multline*}
0 = A'_{ee}(- x)(A'_{fe}(- y) - 2\zeta + 2\alpha a_{he}/a_{ee}) - (A'_{fe}(- x) - 2\zeta + 2\alpha a_{ahe}/a_{ee}) A'_{ee}(- y) \\
 = A'_{ee}(- x)(A'_{fe}(- y) - \beta) - (A'_{fe}(- x) - \beta) A'_{ee}(- y).
\end{multline*}

If $A'_{ee}\equiv0$ or $A'_{fe}-\beta\equiv0$, then the first equality of~\eqref{ee=fe} follows.
Otherwise, we get by~\eqref{ee-feKey} that
$$
\frac{A'_{ee}(-x)}{A'_{fe}(-x)-\beta} 
 = \frac{A'_{ee}(-y)}{A'_{fe}(-y)-\beta}
 = \mathrm{const},
$$
so, we also prove the first equality of~\eqref{ee=fe}.

Applying the same approach to the terms of~\eqref{hff}, with the help of Lemma~\ref{Mee} we get
$$
0 = (A'_{ef}(- x) - 4\zeta + \beta)A'_{ff}(-y) - A'_{ff}(-x)(A'_{fe}(- y) - 4\zeta + \beta),
$$
which implies the second equality of~\eqref{ee=fe}.

Evaluations of~\eqref{fee} and~\eqref{eff}
at $x + y = 0$ give us the second and the third equalities of~\eqref{ee=fecoeff}.
Whereas \eqref{fee} and~\eqref{eff} at $x = 0$ 
after the application of both equalities of~\eqref{ee=fe}
and the second and the third equalities of~\eqref{ee=fecoeff} give us 
\begin{equation}\label{TwoMinorsZero}
a_{ee}a_{hf} = a_{fe}a_{he}, \quad 
a_{ff}a_{he} = a_{fe}a_{hf}.
\end{equation}
Multiplying both relations, we get
$a_{he}a_{hf}(a_{ee}a_{ff} - a_{fe}^2) = 0$. 
If $a_{he},a_{hf}\neq0$, then we prove the first equality of~\eqref{ee=fecoeff}.
Otherwise, we compute \eqref{efh-shift} at $y = 0$ and obtain up to constant by~\eqref{cond-full}
\begin{equation} \label{lem8:help}
0 = A'_{ee}(-x)A'_{ff}(x) - (A'_{ef}(-x) - (4\zeta-\beta))(A'_{fe}(x)-\beta)
 -2(2\zeta-\beta)A'_{fe}(x).
\end{equation}
If $a_{ef} = 0$, then $a_{ee}a_{ff} = 0$, as required.
Otherwise, introduce $k = \deg(f_{ef})$. 
Then naturally $\deg(f_{ee}) + \deg(f_{ff}) = 2k$
and the coefficient at $x^{2k+2}$ in~\eqref{lem8:help} equals 
$0 = a_{ef}^2 - a_{ee}a_{ff}$.
Hence, we have proved the first equality of~\eqref{ee=fecoeff}.
\end{proof}

\begin{corollary} \label{coro:Main}
Let $L = \Cur(\sl_2(\mathbb{C}))$ and
$r = \sum A_{ql}(\partial_{\otimes 1}, 
\partial_{\otimes 2}) q\otimes l\in L\otimes L$, 
$q, l \in \{e, f, h\}$, be an $L$-invariant solution to the weak CCYBE. 
Then $A'_{ql}(x) - A'_{ql}(0) = a_{ql} x f(x^2)$ for some unitary polynomial $f(x)$
and $a_{ql}\in\mathbb{C}$ such that the matrix $M = (a_{ql})_{q,l=1}^3$ is a~symmetric matrix of rank at most 1.
\end{corollary}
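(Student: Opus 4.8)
The plan is to read off from Lemmas~\ref{Mff}, \ref{Mee} and \ref{Mhh} all the $2\times 2$ minors of $M$ and check that they vanish, and then to use the first equalities of those lemmas to glue the polynomials $f_{ql}$ into a single $f$. By Lemma~\ref{symm} and Corollary~\ref{odd} we already know that each $A'_{ql}(x)-A'_{ql}(0)$ is odd and that $M=(a_{ql})$ is symmetric, so $A'_{ql}(x)-A'_{ql}(0)=a_{ql}\,x\,f_{ql}(x^2)$ with $f_{ql}$ monic; what remains is exactly that $\mathrm{rank}\,M\le 1$ and that the $f_{ql}$ may be chosen independent of $q,l$. Recall that a $3\times 3$ matrix has rank at most $1$ precisely when all of its $2\times 2$ minors vanish.

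First I would dispose of the rank bound. The coefficient relations in \eqref{ee=hecoeff}, \eqref{ff=hfcoeff} and \eqref{ee=fecoeff} give the three principal minors $a_{ee}a_{hh}-a_{he}^2$, $a_{ff}a_{hh}-a_{hf}^2$ and $a_{ee}a_{ff}-a_{fe}^2$, while equation~\eqref{TwoMinorsZero} supplies $a_{ee}a_{hf}=a_{fe}a_{he}$ and $a_{ff}a_{he}=a_{fe}a_{hf}$. By symmetry of $M$ the only remaining minor is $a_{fe}a_{hh}-a_{he}a_{hf}$. Multiplying $a_{ee}a_{hf}=a_{fe}a_{he}$ by $a_{hf}$ and substituting $a_{hf}^2=a_{ff}a_{hh}$ and $a_{ee}a_{ff}=a_{fe}^2$ yields $a_{fe}(a_{fe}a_{hh}-a_{he}a_{hf})=0$; and if $a_{fe}=0$ then one of $a_{ee},a_{ff}$ vanishes, whence by the corresponding principal minor one of $a_{he},a_{hf}$ vanishes and the last minor is zero as well. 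Thus every $2\times 2$ minor vanishes and $\mathrm{rank}\,M\le 1$.

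For the common polynomial $f$, I would argue along the support $S=\{q:a_{qq}\ne 0\}$. If $M=0$ then every $A'_{ql}$ is constant and $f=1$ works. Otherwise the principal minors show that for $q\ne l$ one has $a_{ql}\ne 0$ if and only if $q,l\in S$, so every nonzero coefficient sits in $S\times S$ and there all the coefficients appearing as multipliers in the first equalities \eqref{ee=he}, \eqref{ff=hf}, \eqref{ee=fe} are nonzero. Dividing those equalities then forces $f_{ee}=f_{he}=f_{hh}$, $f_{ff}=f_{hf}=f_{hh}$ and $f_{ee}=f_{fe}=f_{ff}$ for whichever indices lie in $S$, i.e.\ all the relevant $f_{ql}$ coincide with a single monic polynomial $f$; for the remaining pairs $a_{ql}=0$ and I may simply set $f_{ql}=f$, giving $A'_{ql}(x)-A'_{ql}(0)=a_{ql}\,x\,f(x^2)$ for all $q,l$.

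The main obstacle I anticipate is the bookkeeping of the degenerate cases. The clean statements ``rank at most $1$'' and ``common $f$'' are immediate once all coefficients are nonzero, but the division arguments that identify the $f_{ql}$ collapse as soon as some $a_{ql}$ vanishes, and likewise the derivation of the last minor branches on whether $a_{fe}=0$. The point to exploit is that the vanishing of a coefficient never stands alone: the principal-minor identities of Lemmas~\ref{Mff}--\ref{Mhh} propagate each zero through the matrix, so one can always route around the missing division and still pin down the surviving $f_{ql}$ and the outstanding minor.
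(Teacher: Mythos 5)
Your proof is correct. For the ``common $f$'' half it follows essentially the same route as the paper: you chain the first equalities of Lemmas~\ref{Mff}, \ref{Mee}, \ref{Mhh} over the indices $q$ with $a_{qq}\neq 0$, using the principal-minor relations $a_{ql}^2=a_{qq}a_{ll}$ to see that every nonzero off-diagonal entry lives over the support; the paper organizes the identical argument as a case analysis on which of $a_{ee},a_{hh},a_{ef},a_{eh}$ vanish, invoking the automorphism $\Psi$ to halve the casework. Where you genuinely diverge is the last $2\times 2$ minor $a_{fe}a_{hh}-a_{he}a_{hf}$: the paper obtains it from an additional analytic input, namely the coefficient of $y^{2k+2}$ in the specialization of~\eqref{efh-shift} at $x=0$, whereas you deduce it purely algebraically from the five minors already known to vanish --- multiplying $a_{ee}a_{hf}=a_{fe}a_{he}$ by $a_{hf}$ and substituting the principal minors gives $a_{fe}(a_{fe}a_{hh}-a_{he}a_{hf})=0$, and in the degenerate branch $a_{fe}=0$ the relations $a_{ee}a_{ff}=a_{fe}^2=0$, $a_{he}^2=a_{ee}a_{hh}$, $a_{hf}^2=a_{ff}a_{hh}$ force one of $a_{he},a_{hf}$ to vanish, so the minor is zero anyway. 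I checked this chain and it is airtight, so your version buys a small but real simplification: no further use of the CCYBE projections is needed once Lemmas~\ref{Mff}--\ref{Mhh} are in hand, the vanishing of the sixth minor being a formal consequence of the other five for a symmetric matrix. The paper's route, by contrast, reproves that minor from the equations themselves, which is redundant but harmless.
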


\begin{proof}
Let us prove that there exists a unitary polynomial $f$ such that $f_{ql} = f$ for all $q,l=1,\ldots,3$. 
If all $a_{ql}$ are nonzero, it follows by~\eqref{ee=he},~\eqref{ff=hf},~\eqref{ee=fe}.

{\sc Case 1}: $a_{ee} = 0$.
Then $a_{he} = a_{ef} = 0$ by~\eqref{ee=hecoeff},~\eqref{ee=fecoeff}.
If at least one of the numbers $a_{ff},a_{fh},a_{hh}$ is zero, then it is easy to show that at most one of $a_{ql}$ is nonzero.
If all of $a_{ff},a_{fh},a_{hh}$ are nonzero, then $f_{ff} = f_{fh} = f_{hh}$ by~\eqref{ff=hf}.

{\sc Case 2}: $a_{hh} = 0$.
Then $a_{he} = a_{hf} = 0$ by~\eqref{ee=hecoeff},~\eqref{ee=fecoeff}.
We deal with this case analogously as with Case~1.

{\sc Case 3}: $a_{ef} = 0$.
By~\eqref{ee=fecoeff}, $a_{ee}a_{ff} = 0$.
Up to the action of~$\Psi$, we may assume that $a_{ee} = 0$, it is Case~1.

{\sc Case 4}: $a_{eh} = 0$. By~\eqref{ee=hecoeff}, $a_{ee}a_{hh} = 0$,
so we go to already considered Case~1 or~2.

Now, we want to prove that all minors of order two of~$M$ equal zero.  
By the first equalities of~\eqref{ee=hecoeff},~\eqref{ff=hfcoeff},~\eqref{ee=fecoeff} and by~\eqref{TwoMinorsZero} it remains to prove that $a_{ef}a_{hh} = a_{fh}a_{eh}$.
Consider~\eqref{efh-shift} at $x = 0$ and get up to constant
\begin{multline*}
0 = 2(A'_{ef}(-y)-(4\zeta-\beta))(A'_{hh}(-y)-\zeta)
 -2(A'_{eh}(-y)+\alpha)(A'_{fh}(-y)+\gamma) \\
 + 4\gamma A'_{eh}(-y) + (2\zeta-\beta)A'_{ef}(-y) + 2\alpha A'_{fh}(-y).
\end{multline*}
Analyzing the coefficient at $y^{2k+2}$, where $k = \deg f$, we get the required equality.
\end{proof}

\subsection{Description of solutions to the (weak) CCYBE}

Now we prove the main result of the work. 

\begin{theorem} \label{thm}
Let $L = \Cur(\sl_2(\mathbb{C}))$,
$r = \sum A_{ql}(\partial_{\otimes 1}, 
\partial_{\otimes 2}) q\otimes l\in L\otimes L$, 
$q, l \in \{e, f, h\}$, be an $L$-invariant solution to the weak CCYBE.
Then $A_{ql}(0,0)$ are defined by~\eqref{eq:A'ql(0)} for some $\alpha,\beta,\gamma,\zeta\in\mathbb{C}$. 
Moreover, $A_{ql}(x,-x) - A_{ql}(0,0) = a_{ql} x f(x^2)$ for a unitary polynomial 
$f(x)$ and $a_{q,l} \in \mathbb{C}$, and 
up to action of the automorphism group of~$L$, we have three cases:

(i) $a_{ee} = 1$, $a_{ql} = 0$ for $(q,l)\neq(e,e)$, and $\gamma = 2\zeta - \beta = 0$;

(ii) $a_{hh} = \lambda\in\mathbb{C}\setminus\{0\}$, $a_{ql} = 0$ for $(q,l)\neq(h,h)$, and $\alpha = \gamma = 0$; 

(iii) $a_{ql} = 0$ for all $q,l$.

\noindent
Conversely, every $r$ satisfying~\eqref{eq:A'ql(0)} and one of the conditions (i)--(iii) is a solution to the weak CCYBE.
\end{theorem}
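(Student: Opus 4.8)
The plan is to combine Corollary~\ref{coro:Main} with the additional algebraic constraints coming from~\eqref{ee=hecoeff},~\eqref{ff=hfcoeff},~\eqref{ee=fecoeff}, and then verify the converse by direct substitution. By Corollary~\ref{coro:Main} I already know that $A'_{ql}(x) - A'_{ql}(0) = a_{ql}xf(x^2)$ for a single unitary $f$ and that the matrix $M = (a_{ql})$ is symmetric of rank at most $1$. So the forward direction reduces to analyzing which rank-$\le 1$ symmetric matrices $M$ are actually compatible with the scalar relations linking the $a_{ql}$ to $\alpha,\beta,\gamma,\zeta$, and then normalizing via $\Aut(\sl_2(\mathbb{C}))$.

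For the forward direction I would argue as follows. A symmetric matrix of rank at most $1$ is either zero (giving case~(iii)) or equals $v v^{\mathsf T}$ up to scalar for some nonzero vector $v = (v_e, v_f, v_h)$. First I would dispose of the zero case. Then, assuming $M \neq 0$, I would feed the rank-one structure into the second and third equalities of~\eqref{ee=hecoeff},~\eqref{ff=hfcoeff},~\eqref{ee=fecoeff}, which read $(2\zeta-\beta)a_{ee} = 2\alpha a_{he}$, $(2\zeta-\beta)a_{ff} = -2\gamma a_{hf}$, $\gamma a_{ee} = -\alpha a_{fe}$, and their companions. These tie the constants $\alpha,\gamma,2\zeta-\beta$ to the entries of $v$. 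The key move is to use the freedom of $\Aut(\sl_2(\mathbb{C}))$: by Lemma~\ref{lem:ZeroLevel}(b) the skew-symmetric zero-level part can be normalized, and more concretely I can use a suitable automorphism (such as the torus action and the involution $\Psi$ from~\eqref{PsiAut}) to rotate the vector $v$ into one of two canonical positions --- either aligned with $e\otimes e$ (giving case~(i)) or aligned with $h\otimes h$ (giving case~(ii)). In the $e\otimes e$ normalization, $a_{ee}=1$ and all other $a_{ql}=0$, and the scalar relations force $\gamma = 2\zeta-\beta = 0$; in the $h\otimes h$ normalization, $a_{hh}=\lambda \neq 0$ with the rest zero, and the relations force $\alpha = \gamma = 0$.

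The main obstacle is organizing the case analysis so that every rank-$\le 1$ symmetric $M$ genuinely falls, after an automorphism, into exactly one of (i)--(iii), and checking that no further solutions slip through --- in particular that the $e\otimes e$ and $h\otimes h$ normalized forms are not equivalent and exhaust the nonzero possibilities. Here I expect to lean on the fact that $v v^{\mathsf T}$ is determined by the line spanned by $v$, and that the adjoint action of $\Aut(\sl_2(\mathbb{C}))$ on $\sl_2(\mathbb{C})$ has only a few orbit types (nilpotent versus semisimple directions), matching the dichotomy between case~(i), where $v$ points in the nilpotent direction $e$, and case~(ii), where $v$ points in the semisimple direction $h$. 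The delicate point is tracking the induced transformation of $\alpha,\beta,\gamma,\zeta$ under the automorphism so that the constraints $\gamma=2\zeta-\beta=0$ and $\alpha=\gamma=0$ come out cleanly in the respective cases.

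For the converse, I would substitute each candidate $r$ back into the defining system. Since the weak CCYBE for an $L$-invariant $r$ is equivalent to~\eqref{cond-full} together with~\eqref{eee}--\eqref{hhh} and the variations of~\eqref{efh} (as established in~\S3.3), it suffices to check that in each of cases (i)--(iii) every one of these polynomial identities holds. In case~(iii) all $A'_{ql}$ are constant and every equation collapses to a relation among $\alpha,\beta,\gamma,\zeta$ already guaranteed by Lemma~\ref{lem:ZeroLevel}, so it is precisely the classical statement that an $\sl_2$-invariant solution of the weak CYBE lifts trivially. In cases (i) and (ii) only a single off-diagonal or diagonal polynomial is nonconstant, so most products $A'_{ql}A'_{q'l'}$ in~\eqref{eee}--\eqref{hhh} vanish or reduce to the constant level, and the surviving terms cancel using the oddness from Corollary~\ref{odd} and the vanishing of the relevant constants ($\gamma=2\zeta-\beta=0$, respectively $\alpha=\gamma=0$). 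This verification is routine but tedious; I would present it as a finite check, highlighting one representative equation such as~\eqref{hee} in case~(i) and deferring the remaining analogous identities.
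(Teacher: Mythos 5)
Your proposal is correct and follows the same architecture as the paper's proof: Corollary~\ref{coro:Main} reduces everything to a symmetric rank-$\le 1$ matrix $M$, the automorphism group acts on $M$ by congruence $M\mapsto\Phi M\Phi^{\mathsf T}$, and the constraints $\gamma=2\zeta-\beta=0$ (resp.\ $\alpha=\gamma=0$) are read off from the scalar relations in Lemmas~\ref{Mff}--\ref{Mhh}, exactly as you describe. Two execution-level differences are worth recording. For the normalization you invoke the adjoint-orbit picture (write $M=vv^{\mathsf T}$ and distinguish $v$ nilpotent from $v$ semisimple, with the Killing-form value surviving as the invariant $\lambda$ in case~(ii)); the paper instead performs the congruence reduction by explicit matrix computation. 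Your version is cleaner and explains conceptually why $\lambda$ cannot be removed in case~(ii) while $a_{ee}$ rescales to $1$ in case~(i), but you should drop the appeal to Lemma~\ref{lem:ZeroLevel}(b): the automorphism is already spent on normalizing $M$, the zero-level part is \emph{not} simultaneously normalized, and indeed $\alpha,\beta,\zeta$ remain free parameters in the statement; the vanishing of $\gamma$ and $2\zeta-\beta$ (resp.\ $\alpha$ and $\gamma$) comes only from the scalar relations, not from any further normalization. For the converse, your plan to verify \eqref{eee}--\eqref{hhh} together with all variations of~\eqref{efh} is sound (and your sample cancellations via Corollary~\ref{odd} and the vanishing constants do go through), but it requires enumerating those variations, which the paper never lists in full. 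The paper instead expands $a_\mu\llbracket r,r\rrbracket$ once and groups the terms sharing a common factor $A'_{ee}(t)$ (resp.\ $A'_{hh}(t)$), recognizing each group as $e$ acting on the zero-level tensor $r_0=\sum A'_{q'l'}(0)\,q'\otimes l'$, which vanishes precisely because $\gamma=2\zeta-\beta=0$; this is the structural shortcut that replaces your term-by-term check.
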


\begin{proof}
Suppose that 
$r = \sum A_{ql}(\partial_{\otimes 1},\partial_{\otimes 2}) q\otimes l\in L\otimes L$, 
$q, l \in \{e, f, h\}$, is an $L$-invariant solution to the weak CCYBE on $L$.
For brevity, let us refer to $e, f, h$ as $1, 2, 3$, respectively.

By Corollary~\ref{coro:Main},
$A_{ql}(x,-x) - A_{ql}(0,0) = a_{ql} x f(x^2)$, for some unitary polynomial $f(x)$,
and $a_{ql}\in\mathbb{C}$ form a~symmetric matrix of rank at most 1.
Denote it $M = (a_{ql})$, $q,l\in \{1,2,3\}$.
Next, we simplify the matrix~$M$ with help of~$\Aut(L)$.  

Let $\Phi = (\Phi_{ij})$ be a matrix of an automorphism~$\varphi$ of $\sl_2(\mathbb{C})$
in the basis $e, f, h$. 
It is easy to compute that 
$$
\Phi = \begin{pmatrix} 
a^2 & -b^2 & -2ab \\ 
-c^2 & d^2 & 2cd  \\ 
-ac & bd & ad+bc\end{pmatrix},
$$
where $a, b, c, d \in \mathbb{C}$ satisfy the condition $ad - bc = 1$.
Then for any solution of the (weak) CCYBE we have 
\begin{multline*}
\varphi(r) 
 = \varphi\left(\sum_{q, l = 1}^3 A_{ql}(\partial_{\otimes 1},\partial_{\otimes 2}) q\otimes l\right) 
 = \sum_{q, l = 1}^3 A_{ql}(\partial_{\otimes 1},\partial_{\otimes 2}) \varphi(q)\otimes \varphi(l) 
 \allowdisplaybreaks \\
 = \sum_{q, l = 1}^3 \sum_{i, j = 1}^3
 A_{ql}(\partial_{\otimes 1}, \partial_{\otimes 2}) \Phi_{iq} i \otimes \Phi_{j l} j 
 = \sum_{i, j = 1}^3 \left(\sum_{q, l = 1}^3 \Phi_{iq} \Phi_{j l}
 A_{ql}(\partial_{\otimes 1}, \partial_{\otimes 2})\right)  (i \otimes  j).
\end{multline*}
Denote 
$\hat{A}_{ij}(x, y) = \sum_{q, l = 1}^3 \Phi_{iq} \Phi_{j l} A_{ql}(x, y)$. 
Hence, 
$$
\hat{A}_{ij}(x, y) - \hat{A}_{ij}(0, 0)
 = \sum_{q, l = 1}^3 \Phi_{iq} \Phi_{j l} (A_{ql}(x, y) - A_{ql}(0,0)),
$$
and we may assume that all constants are subtracted.
Due to the already proved conditions, let us consider
$$
\hat{a}_{ij} x \hat{f}(x^2) 
 = \hat{A}'_{ij}(x) 
 = \sum_{q, l = 1}^3 \Phi_{iq} \Phi_{j l} A'_{ql}(x) 
 = \sum_{q, l = 1}^3 \Phi_{iq} \Phi_{j l} a_{ql} x f(x^2),
$$
so, we conclude that $\hat{a}_{ij} = \sum_{q, l = 1}^3 \Phi_{iq} \Phi_{j l} a_{ql}$. 
In other words, $\hat{A} = \Phi A \Phi^T$ and the group $\Aut{(\sl_2(\mathbb{C}))}$ acts on the set of symmetric matrices of rank~1 by a congruence. 

Suppose that $a_{ii}\neq0$ for $i = 1,2,3$.
Take $\Phi$ with $b = 0$, then 
$\hat{a}_{11} = a^4 a_{11}$ and $\hat{a}_{22} = c^4 a_{11} - 4c^3d a_{13} +2c^2d^2(2a_{33}-a_{12}) + 4cd^3 a_{23} + d^4a_{22}$. Since $a_{11}\neq0$, we may choose $a,c$ such that $\hat{a}_{11} = \hat{a}_{22} = 1$.
Hence, $\hat{A}$ has one the following two forms:
\[
\begin{pmatrix}
1 & 1 & \pm \sqrt{a_{33}} \\
1 & 1 & \pm \sqrt{a_{33}} \\
\pm \sqrt{a_{33}} & \pm \sqrt{a_{33}} & a_{33}
\end{pmatrix}, \quad
\begin{pmatrix}
1 & -1 & \pm \sqrt{a_{33}} \\
-1 & 1 & \mp \sqrt{a_{33}} \\
\pm \sqrt{a_{33}} & \mp \sqrt{a_{33}} & a_{33}
\end{pmatrix}.
\]

For both variants, take $a = 0$, $b = 1$; 
$d = \pm \sqrt{a_{33}}c$ for the first one and 
$d = \mp \sqrt{a_{33}}c$ for the second one.
Then we get the matrix 
$Q = \begin{pmatrix}
1 & x & 0 \\
x & y & 0 \\
0 & 0 & 0
\end{pmatrix}$, 
where $y = x^2$.
If $x = 0$, then it is (i). 
Otherwise, we take $\Phi$ such that $a^2 = b^2 x$, $c^2 = d^2 x$ and $ad - bc = 1$, then 
$\hat{A} = \lambda e_{33}$, where $\lambda = 2bdx(bdx - ac)\in\mathbb{C}$.
It is the case (ii).

Now, consider the case when not all $a_{ii}$ are nonzero.
If only one of them is nonzero, then either $A = a_{33}e_{33}$ or $A = a_{ii}e_{ii}$ for $i\in\{1,2\}$. In the first case, we have (ii). In the second one, up to action of~$\Psi$ and $\Phi$ with $a^4 a_{11} = 1$, we get (i).
Again, up to action of~$\Psi$, it remains to study the case 
$A = \begin{pmatrix}
a_{11} & 0 & a_{13} \\
0 & 0 & 0 \\
a_{13} & 0 & a_{33}
\end{pmatrix}$,
where $a_{11}a_{33} = a_{13}^2$.
Then we apply action of $\Phi$ with $aca_{11} = (ad+bc)a_{13}$ and $c = aa_{13}$ to get the matrix~$Q$.

If $a_{ee}\neq0$, then $2\zeta-\beta = 0$ by~\eqref{ee=hecoeff} and $\gamma = 0$ by~\eqref{ee=fecoeff}.
If $a_{hh}\neq0$, then $\alpha = 0$ by~\eqref{ee=hecoeff} and $\gamma = 0$ by~\eqref{ff=hfcoeff}.

Conversely, let $r$ satisfy~\eqref{eq:A'ql(0)} and one of the conditions (i)--(iii).
In \S\ref{sec:Skew-sym}, we have found an equivalent conditions for $L$-invariance of~$r$. They, i.\,e. \eqref{cond-full}, are fulfilled for indicated~$r$.
Fixing $a\in\sl_2(\mathbb{C})$,
we compute $a_{\mu}\llbracket r,r\rrbracket$
by~\eqref{rr} modulo $\mu = - \partial^{\otimes 3}$:
\begin{multline*}
\sum_{q,q',l,l' = 1}^3 (A'_{ql}(-\partial_{\otimes 2})A'_{q'l'}(-\partial_{\otimes 3}) 
[a, [q, q']] \otimes l \otimes l'
+ A'_{ql}(\partial_{\otimes 1}+\partial_{\otimes 3})
A'_{q'l'}(-\partial_{\otimes 3})[q, q'] \otimes [a, l] \otimes l' \\
+ A'_{ql}(-\partial_{\otimes 2})A'_{q'l'}(\partial_{\otimes 1}+\partial_{\otimes 2})[q, q'] \otimes l \otimes [a, l']
- A'_{ql}(-\partial_{\otimes 2}-\partial_{\otimes 3})A'_{q'l'}(-\partial_{\otimes 3})[a, q] \otimes [q', l] \otimes l' \\
- A'_{ql}(\partial_{\otimes 1})A'_{q'l'}(-\partial_{\otimes 3})
q \otimes [a, [q', l]] \otimes l' 
- A'_{ql}(\partial_{\otimes 1})A'_{q'l'}(\partial_{\otimes 1}+\partial_{\otimes 2})
q \otimes [q', l] \otimes [a, l'] \\
- A'_{ql}(-\partial_{\otimes 2}-\partial_{\otimes 3})A'_{q'l'}(\partial_{\otimes 2})[a, q] \otimes q' \otimes [l', l] 
- A'_{ql}(\partial_{\otimes 1})A'_{q'l'}(-\partial_{\otimes 1}-\partial_{\otimes 3})q \otimes [a, q'] \otimes [l', l] \\ 
- A'_{ql}(\partial_{\otimes 1})A'_{q'l'}(\partial_{\otimes 2}) q \otimes q' \otimes [a, [l', l]]).
\end{multline*}
By Lemma~\ref{lem:ZeroLevel},
the constant coefficient (not depending on $\partial$) at $q\otimes l\otimes m$ for all $q,l,m\in \{1,2,3\}$ equals zero.
Thus, we are done with the case (iii).
Note that when $q = l = q' = l'$, we get zero due to the multiplication table in~$\sl_2(\mathbb{C})$. 

Сonsider the case (i). 
Recall that $a_{ee} = 1$, $A'_{he}(0) = - A'_{eh}(0) = \alpha$, $A'_{fe}(0) = A'_{fe}(0) = 2A'_{hh} = \beta$, and $A'_{ql} \equiv 0$ for all other $q,l$.
Let us gather terms according to a common factor $A'_{11}(t)$, for 
$t = \partial_{\otimes i}$ or $t = \partial_{\otimes i}+\partial_{\otimes j}$, $i\neq j$.
Say, at $A'_{11}(\partial_{\otimes 1})$ we have
\begin{multline*}
- \sum_{q',l' = 1}^3 A'_{q'l'}(0)( 1 \otimes [a, [q', 1]] \otimes l' 
+ 1 \otimes [q', 1] \otimes [a, l'] 
+ 1 \otimes [a, q'] \otimes [l', 1] 
+ 1 \otimes q' \otimes [a, [l', 1]]) \\
 = - 1\otimes a_{\mu}\left(\sum_{q',l' = 1}^3 A'_{q'l'}(0)(
 [q',1]\otimes l' + q'\otimes [l',1])
\right) \\
 = 1\otimes a_{\mu}\left(1_{\mu}\left(\sum_{q',l' = 1}^3 A'_{q'l'}(0)q'\otimes l'\right)\right) 
 = 1\otimes a_{\mu} 0 = 0
\end{multline*}
Analogously, we deal with all other sums with a common factor $A'_{11}(t)$. Hence, $r$ is a~solution to the weak CCYBE. 

In the case (ii), when $a_{hh} = \lambda\neq0$, $A'_{fe}(0) = \beta$, $A'_{ef}(0) = 2\zeta-\beta$, $A'_{hh}(0) = \zeta$, and $A'_{ql} \equiv 0$ for all other $q,l$, we apply the same approach to show that $r$ is a solution to the weak CCYBE.
\end{proof}

\begin{corollary} \label{coro:main}
Let $L = \Cur(\sl_2(\mathbb{C}))$,
$r = \sum A_{ql}(\partial_{\otimes 1}, 
\partial_{\otimes 2}) q\otimes l\in L\otimes L$, 
$q, l \in \{e, f, h\}$, be a skew-symmetric solution to the CCYBE. 
Then $A_{ql}(0,0)$ are defined by~\eqref{eq:A'ql(0)} with $\zeta = 0$. 
Moreover, $A_{ql}(x,-x) - A_{ql}(0,0) = a_{ql} x f(x^2)$ for some unitary polynomial $f(x)$ and $a_{q,l} \in \mathbb{C}$, and 
up to action of the automorphism group of~$L$, we have three cases:

(i) $a_{ee} = 1$, $a_{ql} = 0$ for $(q,l)\neq(e,e)$, and $\beta = \gamma = 0$;

(ii) $a_{hh} = \lambda\in\mathbb{C}\setminus\{0\}$, $a_{ql} = 0$ for $(q,l)\neq(h,h)$, and $\alpha =  \beta = \gamma = 0$; 

(iii) $a_{ql} = 0$ for all $q,l$.
\end{corollary}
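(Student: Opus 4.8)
The plan is to obtain Corollary~\ref{coro:main} as a specialization of Theorem~\ref{thm}. First I observe that the two hypotheses combine favorably: skew-symmetry $r + \tau(r) = 0$ makes the $L$-invariance condition~\eqref{ConfInv} hold automatically, and any solution of the full CCYBE~\eqref{CCYBE} is in particular a solution of the weak CCYBE~\eqref{weak-CCYBE}. Thus every skew-symmetric solution of~\eqref{CCYBE} is an $L$-invariant solution of the weak CCYBE, and Theorem~\ref{thm} applies: one has $A_{ql}(x,-x) - A_{ql}(0,0) = a_{ql}xf(x^2)$ with $(a_{ql})$ symmetric of rank at most one, and up to $\Aut(L)$ the matrix $M = (a_{ql})$ equals $e_{11}$, $\lambda e_{33}$, or $0$.

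Two further inputs cut the parameter space down to the stated normal forms. Evaluated on the diagonal $y = -x$, skew-symmetry reads $A'_{ql}(x) + A'_{lq}(-x) = 0$, which is precisely~\eqref{cond-full} with $\zeta = 0$; hence $\zeta = 0$ and the constants $A_{ql}(0,0)$ are given by~\eqref{eq:A'ql(0)} at $\zeta = 0$. For the remaining constraint I would exploit the one place where the full CCYBE is strictly stronger than its weak counterpart: the full equation requires~\eqref{efh} itself, not merely the shifted identity~\eqref{efh-shift} that suffices in the weak version. Under the rank-one structure from the Theorem the nonconstant part of~\eqref{efh} vanishes, so~\eqref{efh} reduces to its value at $x = y = 0$; computing that value from~\eqref{eq:A'ql(0)} with $\zeta = 0$ leaves the single scalar identity $\beta^2 = 4\alpha\gamma$.

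With $\zeta = 0$ and $\beta^2 = 4\alpha\gamma$ the three cases follow at once. In case~(i) the Theorem already gives $\gamma = 2\zeta - \beta = 0$, so $\zeta = 0$ forces $\beta = \gamma = 0$ and the scalar identity is automatic, leaving $\alpha$ free. In case~(ii) the Theorem gives $\alpha = \gamma = 0$, whence $\beta^2 = 4\alpha\gamma = 0$ and therefore $\beta = 0$; this is the genuinely new output of the full equation, and indeed in this configuration~\eqref{efh} evaluates to the constant $\beta^2$. In case~(iii) all $a_{ql}$ vanish, so $r$ is a constant tensor and~\eqref{eee}--\eqref{efh} collapse to the ordinary CYBE on $\sl_2(\mathbb{C})$; the surviving relation $\beta^2 = 4\alpha\gamma$ is exactly its skew-symmetric solvability condition, and Lemma~\ref{lem:ZeroLevel}b normalizes the nonzero solutions to $h\otimes e - e\otimes h$ up to automorphism and scalar.

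For the converse I would substitute each normal form into the ten projections~\eqref{eee}--\eqref{efh} and check that they vanish. In cases~(i) and~(ii) exactly one coefficient ($A'_{ee}$, respectively $A'_{hh}$) is nonconstant; in each projection its nonconstant contribution either meets a factor that is identically zero or cancels by the oddness of $A'_{ql} - A'_{ql}(0)$, while the constant terms cancel using~\eqref{cond-full} at $\zeta = 0$. I expect the only real difficulty to be the bookkeeping in case~(ii): one must confirm that~\eqref{efh} delivers the unshifted constant $\beta^2$, rather than the quantity $\beta^2 - \beta^2 = 0$ that the weak analysis sees through~\eqref{efh-shift}, so that $\beta = 0$ is genuinely forced rather than left free as it is for the weak CCYBE.
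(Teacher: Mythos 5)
Your proof is correct and takes essentially the same route as the paper: pass from skew-symmetry to $L$-invariance and from the CCYBE to its weak version so that Theorem~\ref{thm} applies, read off $\zeta=0$ from skew-symmetry on the diagonal, and then impose the unshifted equation~\eqref{efh}, whose only new content is the constant term, yielding $\beta=0$ in case (ii). Your explicit identification of that constant as $\beta^2-4\alpha\gamma$ (and the reduction of case (iii) to the ordinary CYBE via Lemma~\ref{lem:ZeroLevel}) is a faithful elaboration of what the paper's three-sentence proof leaves implicit.
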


\begin{proof}
The condition $\zeta = 0$ implies that $r$ is a~skew-symmetric solution to CCYBE.
It remains to take a solution to the weak CCYBE from Theorem~\ref{thm} and substitute it into~\eqref{efh}. 
Thus, we get the additional condition $\beta = 0$ in (ii).
\end{proof}

\section{Solutions to the (weak) CCYBE on $\mathrm{Vir}$}

Now, let us shed some light on the situation with $\mathrm{Vir}$. 
Recall that $\mathrm{Vir} = \mathbb{C}[\partial]v$ with the multiplication rule $[v_{\lambda} v] = 
(\partial + 2 \lambda)v$. 

Consider $r = A(\partial_{\otimes 1}, \partial_{\otimes 2}) v \otimes v$ for some
$A(x, y)\in \mathbb{C}[x,y]$.
Analogously to~\S3.2, $r$~is $L$-invariant if and only if
the projection of $A(x, y)$ on  $x + y = 0$ is  skew-symmetric.
Indeed, consider $v_\mu (r + \tau(r)) = 0$ modulo $\mu = - \partial^{\otimes 2}$,
we follow the notation introduced in~\S3:
\begin{multline*}
0 = v_\mu (A(\partial_{\otimes 1}, \partial_{\otimes 2}) + A(\partial_{\otimes 2}, \partial_{\otimes 1})) v \otimes v \\
=  ((A(\partial_{\otimes 1} + \mu, \partial_{\otimes 2}) + A(\partial_{\otimes 2}, \partial_{\otimes 1} + \mu))(\partial_{\otimes 1} + 2\mu) \\
+ (A(\partial_{\otimes 1}, \partial_{\otimes 2} + \mu) + A(\partial_{\otimes 2} + \mu, \partial_{\otimes 1}))(\partial_{\otimes 2} + 2\mu))
v \otimes v  \\
=  ((A'(- \partial_{\otimes 2}) + A'(\partial_{\otimes 2}))(-\partial_{\otimes 1} - 2\partial_{\otimes 2})
+ (A'(\partial_{\otimes 1}) + A'(- \partial_{\otimes 1}))(-\partial_{\otimes 2} - 2\partial_{\otimes 1})) v \otimes v.
\end{multline*}
Evaluating at $\partial_{\otimes 2} = 0$, we get that $A(x,-x) = -A(-x,x)$.

Next, suppose that $r$ is an $L$-invariant solution to the weak CCYBE. Consider 
$v_{\mu}\llbracket r,r\rrbracket$ modulo $\mu = - \partial^{\otimes 3}$:
\begin{multline*}
0 = -[A'(\partial_{\otimes 2})A'(\partial_{\otimes 3})(\partial_{\otimes 2} - \partial_{\otimes 3}) - A'(\partial_{\otimes 2} 
+ \partial_{\otimes 3})A'(\partial_{\otimes 3})(\partial_{\otimes 2} + 2\partial_{\otimes 3}) \\
+ A'(\partial_{\otimes 2} + \partial_{\otimes 3})A'(\partial_{\otimes 2})(\partial_{\otimes 3} + 2 \partial_{\otimes 2})]
(\partial_{\otimes 1} + 2\partial_{\otimes 2} + 2\partial_{\otimes 3}) \\
- [A'(\partial_{\otimes 1} + \partial_{\otimes 3})A'(\partial_{\otimes 3})(\partial_{\otimes 1} + 2\partial_{\otimes 3}) 
+ A'(\partial_{\otimes 1})A'(\partial_{\otimes 3})(\partial_{\otimes 3} - \partial_{\otimes 1}) \\
- A'(\partial_{\otimes 1})
A'(\partial_{\otimes 2})(\partial_{\otimes 3} + 2\partial_{\otimes 1})](2\partial_{\otimes 1} + \partial_{\otimes 2} +
2\partial_{\otimes 3}) \\
- [-A'(\partial_{\otimes 2})A'(\partial_{\otimes 1} + \partial_{\otimes 2})(\partial_{\otimes 1} + 2\partial_{\otimes 2}) + 
A'(\partial_{\otimes 1})A'(\partial_{\otimes 1} + \partial_{\otimes 2})(\partial_{\otimes 2} + 2\partial_{\otimes 1}) \\
- A'(\partial_{\otimes 1})A'(\partial_{\otimes 2})(\partial_{\otimes 2} - \partial_{\otimes 1})](2\partial_{\otimes 1} + 2\partial_{\otimes 2} + 
\partial_{\otimes 3}).
\end{multline*}
Take $\partial_{\otimes 3} = 0$ and $\partial_{\otimes 1} + 2 \partial_{\otimes 2} = 0$.
Then the first sum is zero,
the remaining two sums give the equality,
$$
- 12 A'(2\partial_{\otimes 2})A'(\partial_{\otimes 2})\partial_{\otimes 2}^2 = 0.
$$
The only polynomial which fulfills this equality is zero one.
Hence, the necessary condition is that $A(x,-x) = 0$.
Sufficiency is obvious.

\begin{theorem} \label{Vir}
Let $L = \mathrm{Vir}$,
$r = A(\partial_{\otimes 1}, \partial_{\otimes 2}) v\otimes v\in L\otimes L$.
Then $r$ is an $L$-invariant solution to the (weak) CCYBE if and only if $A(x, -x) = 0$.
\end{theorem}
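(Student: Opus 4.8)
The plan is to reduce everything to the single one-variable polynomial $A'(x) := A(x,-x)$, exactly as in the $\Cur(\sl_2(\mathbb{C}))$ case. Since $\mathrm{Vir}$ is a rank-one free $\mathbb{C}[\partial]$-module, every element of $L^{\otimes n}$ is a polynomial multiple of $v^{\otimes n}$; thus $r$, $r+\tau(r)$, and $\llbracket r,r\rrbracket$ are all carried by a single basis tensor, and the only data is the coefficient polynomial $A(\partial_{\otimes 1},\partial_{\otimes 2})$. Working modulo $\partial^{\otimes 3}$ lets me set $\partial_{\otimes 1}=-\partial_{\otimes 2}-\partial_{\otimes 3}$, so every coefficient that appears is an evaluation of $A'$.

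First I would dispose of $L$-invariance. Expanding $v_\mu(r+\tau(r))$ by sesquilinearity~\eqref{Conf:sesqui} and the Virasoro rule $[v_\lambda v]=(\partial+2\lambda)v$, then reducing modulo the specialization $\mu=-\partial^{\otimes 2}$ dictated by~\eqref{ConfInv}, produces a coefficient at $v\otimes v$ that is linear in the shifted evaluations $A'(\pm\partial_{\otimes i})$ multiplied by linear factors in $\partial_{\otimes 1},\partial_{\otimes 2}$. Specializing $\partial_{\otimes 2}=0$ isolates the single relation $A(x,-x)=-A(-x,x)$, i.e.\ the antidiagonal restriction of $A$ is odd. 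This condition is vacuously satisfied once $A(x,-x)=0$, so it will obstruct neither direction of the theorem.

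The core step is the weak CCYBE. I would expand $v_\mu\llbracket r,r\rrbracket$ from~\eqref{weak-CCYBE}, using $[v_\lambda v]=(\partial+2\lambda)v$ and reducing modulo $\mu=-\partial^{\otimes 3}$. Unlike the current-algebra case, the Virasoro bracket contributes an extra linear factor $(\partial+2\lambda)$ to each summand, so the result is a sum of three groups, each a product of two evaluations of $A'$ times a linear factor of the form $2\partial_{\otimes i}+2\partial_{\otimes j}+\partial_{\otimes k}$. The decisive move is to choose a line in $(\partial_{\otimes 1},\partial_{\otimes 2},\partial_{\otimes 3})$-space that annihilates one group while collapsing the other two to a single monomial in $A'$. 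Setting $\partial_{\otimes 3}=0$ together with $\partial_{\otimes 1}+2\partial_{\otimes 2}=0$ kills the group carrying the factor $\partial_{\otimes 1}+2\partial_{\otimes 2}+2\partial_{\otimes 3}$, while the survivors reduce to
$$
-12\,A'(2\partial_{\otimes 2})\,A'(\partial_{\otimes 2})\,\partial_{\otimes 2}^{2}=0.
$$
Over $\mathbb{C}[x]$ the factor $x^{2}$ is nonzero, so $A'(2x)A'(x)\equiv 0$ forces $A'\equiv 0$, that is $A(x,-x)=0$. Since the strong CCYBE implies its weak version, this necessity argument covers both forms of the equation.

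I expect the main obstacle to be precisely the discovery of this specialization: the unreduced expression for $\llbracket r,r\rrbracket$ is an unwieldy six-term polynomial in three variables, and most substitutions leave a tangle of $A'$ evaluated at many arguments from which nothing can be extracted. Finding a line on which one group drops out and the remaining terms factor through $A'(2x)A'(x)$ is the only creative point; once it is in hand the conclusion is immediate. For sufficiency there is nothing left to prove: if $A(x,-x)=0$ then the invariance relation $A(x,-x)=-A(-x,x)$ holds trivially, and substituting $A'\equiv 0$ into the reduced form of $\llbracket r,r\rrbracket$ annihilates it modulo $\partial^{\otimes 3}$, so $r$ solves the CCYBE and hence also its weak version.
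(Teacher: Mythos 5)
Your proposal is correct and follows essentially the same route as the paper: the same reduction to $A'(x)=A(x,-x)$, the same derivation of the invariance condition $A(x,-x)=-A(-x,x)$ by specializing $\partial_{\otimes 2}=0$, and the identical key specialization $\partial_{\otimes 3}=0$, $\partial_{\otimes 1}+2\partial_{\otimes 2}=0$ yielding $-12\,A'(2\partial_{\otimes 2})A'(\partial_{\otimes 2})\partial_{\otimes 2}^{2}=0$ and hence $A'\equiv 0$. The sufficiency direction is handled just as tersely in the paper.
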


\section*{Acknowledgements}

The authors are supported by the grant of the President of the Russian
Federation for young scientists (MK-1241.2021.1.1).

\medskip
\noindent Vsevolod Gubarev \\
Roman Kozlov \\
Sobolev Institute of Mathematics \\
Acad. Koptyug ave. 4, 630090 Novosibirsk, Russia \\
Novosibirsk State University \\
Pirogova str. 2, 630090 Novosibirsk, Russia \\
e-mail: wsewolod89@gmail.com, dyadkaromka94@gmail.com

\end{document}